
\documentclass[11pt,righttag]{amsart}
\usepackage{diagrams}
\usepackage{graphicx}

\topmargin-0.1in
\textheight8.5in
\textwidth5.5in
\footskip35pt
\oddsidemargin.5in
\evensidemargin.5in

\newtheorem{thm}{Theorem}[section]
\newtheorem{lemma}[thm]{Lemma}
\newtheorem{cor}[thm]{Corollary}
\newtheorem{prop}[thm]{Proposition}

\theoremstyle{definition}
\newtheorem{defn}[thm]{Definition}

\newtheorem{rmk}[thm]{Remark}

\newtheorem{example}[thm]{Example}

\newcommand{\F}{{\mathbb F}}

\begin{document}


\title[Weakly Cohen-Macaulay Posets and a Class of Algebras]{Weakly Cohen-Macaulay posets and a class of finite-dimensional graded quadratic algebras}

\subjclass[2010]{Primary: 16S37, Secondary: 05E15} 
\keywords{Koszul algebra, splitting algebra, Cohen-Macaulay poset}

\author{Tyler Kloefkorn}
\address{Department of Mathematics \\
University of Arizona \\
Tucson, AZ 85721}
\email{tkloefkorn@math.arizona.edu}


\begin{abstract}
\baselineskip12pt
To a finite ranked poset $\Gamma$ we associate a finite-dimensional graded quadratic algebra $R_\Gamma$. Assuming $\Gamma$ satisfies a combinatorial condition known as uniform, $R_{\Gamma}$ is related to a well-known algebra, the {\it splitting algebra} $A_{\Gamma}$. First introduced by Gelfand, Retakh, Serconek, and Wilson, splitting algebras originated from the problem of factoring non-commuting polynomials. Given a finite ranked poset $\Gamma$, we ask: Is $R_{\Gamma}$ Koszul? The Koszulity of $R_{\Gamma}$ is related to a combinatorial topology property of $\Gamma$ called Cohen-Macaulay. Kloefkorn and Shelton proved that if $\Gamma$ is a finite ranked cyclic poset, then $\Gamma$ is Cohen-Macaulay if and only if $\Gamma$ is uniform and $R_{\Gamma}$ is Koszul. We define a new generalization of Cohen-Macaulay, weakly Cohen-Macaulay, and we note that this new class includes posets with disconnected open subintervals. We prove: if $\Gamma$ is a finite ranked cyclic poset, then $\Gamma$ is weakly Cohen-Macaulay if and only if $R_{\Gamma}$ is Koszul. 
\end{abstract}
\date{April 1, 2016}

\maketitle


\baselineskip18pt

\section{Introduction}\label{intro}

We fix a field $\F$. Let $\Gamma$ denote a finite ranked poset with unique minimal element $*$ and order $<$. If $x, y \in \Gamma$, we write $x \rightarrow y$ if $x$ covers $y$.
\begin{defn}\label{RGammadef}The algebra $R_{\Gamma}$ is the graded quadratic $\F$-algebra with degree one generators $r_x$ for all $x \in \Gamma \setminus \{*\}$  and relations
\[ r_x \sum_{x \rightarrow y} r_y = 0 \hspace{.3in} \text{and} \hspace{.3in} r_x r_w = 0 \text{ whenever } x \not \rightarrow w.   \]
\end{defn}

The algebra $R_{\Gamma}$ has a relatively complex history -- we give a brief summary. In \cite{GRSW}, Gelfand, Retakh, Serconek and Wilson associate to $\Gamma$ a connected graded $\F$-algebra $A_{\Gamma}$, which is called the \emph{splitting algebra} of $\Gamma$; splitting algebras are related to the problem of factoring non-commuting polynomials. Retakh, Serconek and Wilson later showed that if $\Gamma$ satisfies a combinatorial condition called \emph{uniform}, then an associated graded algebra of the splitting algebra is quadratic, and it follows that $A_{\Gamma}$ is quadratic (c.f. \cite{RSW5}). These authors then asked a standard question in homological algebra: given a finite uniform ranked $\Gamma$, is $A_{\Gamma}$ Koszul?

\begin{defn}\label{koszuldef}
A connected graded $\F$-algebra $A$ is {\it Koszul} if the trivial right $A$-module $\F_A$ admits a linear projective resolution.
\end{defn} 

\noindent We note that Koszul algebras are quadratic and that there are many equivalent definitions (c.f. \cite{PP}).

If $A_{\Gamma}$ is Koszul, one can use the Hilbert series condition of numerical Koszulity to extract combinatorial data from the algebra. Recent work in the area of splitting algebras often focuses on calculating Hilbert series (c.f. \cite{RSW4} and \cite{RSW1}).

Given a specific $\Gamma$, it is difficult to determine if $A_{\Gamma}$ is Koszul. In fact, preliminary literature incorrectly asserted that $A_{\Gamma}$ is Koszul for all uniform $\Gamma$. We thus pass to a related question. Following \cite{RSW5} and assuming $\Gamma$ is uniform, we filter $A_{\Gamma}$ by rank in $\Gamma$. We denote the associated graded algebra by $grA_{\Gamma}$. Finally, we study the quadratic dual of $grA_{\Gamma}$, which is denoted by $(grA_{\Gamma})^!$. Applying standard techniques, we know that if $(grA_{\Gamma})^!$ is Koszul, then so is $A_{\Gamma}$. We then ask: given a finite uniform ranked $\Gamma$, is $(grA_{\Gamma})^!$ Koszul? We note in \cite{RSW4} and subsequent papers by the same authors, $(gr A_{\Gamma})^!$ is denoted by $B(\Gamma)$.

If $\Gamma$ is uniform, then $R_{\Gamma} = (grA_{\Gamma})^!$. The notation $R_{\Gamma}$ is from \cite{CPS}; Cassidy, Phan and Shelton assume $\Gamma$ is uniform and denote $(gr A_{\Gamma})^!$ with $R_{\Gamma}$. We emphasize: for 
Definition \ref{RGammadef}, $\Gamma$ need not be uniform. 

We are interested in the algebra $R_{\Gamma}$ and Koszulity of $R_{\Gamma}$, even if $\Gamma$ is not uniform and we draw no conclusions about splitting algebras. In \cite{CPS}, Cassidy, Phan and Shelton show that there exists a non-Koszul $R_{\Gamma}$. Also, if $\Gamma$ stems from a geometric object, then the Koszulity of $R_{\Gamma}$ gives important combinatorial and topological data; for example, the authors show that if $\Gamma$ is the intersection poset of a regular CW complex, then $R_{\Gamma}$ is Koszul. Sadofsky and Shelton study posets associated to regular CW complexes in \cite{SadSh} -- they show Koszulity for $R_{\Gamma}$ is a topological invariant.

Kloefkorn and Shelton found connections between $R_{\Gamma}$ and a combinatorial topology property known as Cohen-Macaulay. We say $\Gamma$ is cyclic if it has a unique maximal element. If $a < b$ in $\Gamma$, then $\Delta((a, b))$ denotes the order complex of $(a, b)$.

\begin{defn}\label{CMdef}
A finite ranked cyclic poset $\Gamma$ is {\it Cohen-Macaulay} relative to $\F$ if for all $a < b$ in $\Gamma$, $\tilde{H}^n(\Delta((a, b)), \F) = 0$ for all $n \neq dim \Delta((a, b))$. 
\end{defn}

\noindent A finite ranked cyclic poset is Cohen-Macaulay if every open interval is, as we say, a cohomology bouquet of spheres (or CBS). The Cohen-Macaulay property has been studied extensively; for a survey of all things Cohen-Macaulay, see \cite{BGSsurvey}. 

It is worth noting that both the Cohen-Macaulay and Koszul properties are relative to the field $\F$.

In \cite{KS}, Kloefkorn and Shelton proved the following theorem.

\begin{thm}\label{KSthm}
Let $\Gamma$ be finite ranked cyclic poset. Then $\Gamma$ is Cohen-Macaulay if and only if $\Gamma$ is uniform and $R_{\Gamma}$ is Koszul. 
\end{thm}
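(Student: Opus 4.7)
The plan is to convert Koszulity of $R_\Gamma$ into a topological condition on the order complexes $\Delta((a,b))$ via the numerical/$\Tor$-criterion for Koszulity: $R_\Gamma$ is Koszul if and only if $\Tor^{R_\Gamma}_{i,j}(\F,\F)=0$ whenever $i\neq j$. I want to compute these $\Tor$-groups through an explicit, combinatorially defined complex whose graded pieces, one for each $x \in \Gamma$, coincide (up to shift) with the reduced simplicial cochain complex of $\Delta((*,x))$. Once that identification is in hand, Koszulity becomes exactly the statement that each $\Delta((*,x))$ is a cohomology bouquet of spheres, i.e.\ the CBS condition on the bottom intervals.

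First I would dispose of the uniform condition. For the forward implication I would show directly that Cohen-Macaulay implies uniform: if $\Gamma$ fails to be uniform, one produces either a disconnected open subinterval or an interval whose order complex has a non-top-dimensional cohomology class, contradicting the CBS hypothesis. For the converse I take uniformity as part of the hypothesis; under uniformity the identification $R_\Gamma=(grA_\Gamma)^!$ holds, so I may apply the structural results of Retakh-Serconek-Wilson and Cassidy-Phan-Shelton to access $R_\Gamma$ as a quadratic algebra with an idempotent base indexed by $\Gamma$.

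Next I would construct the relevant complex. For each $x\in \Gamma$, let the $x$-component in homological degree $n$ be spanned by saturated chains $*=x_0\to x_1\to\cdots\to x_n=x$, with a differential whose signed sum of face maps mirrors the simplicial boundary operator on $\Delta((*,x))$. The quadratic relations of $R_\Gamma$ --- the covering relation $r_xr_w=0$ for $x\not\to w$ together with the $r_x\sum_{x\to y}r_y=0$ relation --- are precisely what make this a subcomplex of the bar resolution computing $\Tor^{R_\Gamma}_{n,n}(\F,\F)_x$, and the uniform condition is what ensures that the chain components split cleanly over $x$ with no residual higher-degree contribution. This yields a natural isomorphism $\Tor^{R_\Gamma}_{n,m}(\F,\F)_x \cong \tilde H^{n-2}(\Delta((*,x)),\F)$ in the critical bidegrees, with vanishing elsewhere. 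The CBS property for all $(*,x)$ is then equivalent to Koszulity. To extend CBS from bottom intervals $(*,x)$ to arbitrary open intervals $(a,b)$ (needed for the definition of Cohen-Macaulay), I would use cyclicity and a link-type reduction: the interval $[a,b]$ in $\Gamma$ gives rise to a subposet $\Gamma'$ with $R_{\Gamma'}$ a subquotient of $R_\Gamma$ that is itself Koszul, and applying the previous equivalence to $\Gamma'$ yields the CBS property for $(a,b)$.

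The principal obstacle is the construction of the chain-level model for $\Tor^{R_\Gamma}_{\bullet,\bullet}(\F,\F)$ and the verification that its differential agrees with the simplicial boundary on $\Delta((*,x))$. Without uniformity there are "overlap" contributions from the quadratic relations that would prevent the complex from decomposing according to the poset structure, so showing uniform\, $+$\, Koszul is the correct hypothesis, and identifying exactly where uniformity is used, is the delicate point. A secondary technical obstacle is the link/reduction step in the converse direction: one must verify that restricting to $[a,b]$ yields a Koszul algebra of the form $R_{\Gamma'}$, which requires checking that Koszulity is inherited along the appropriate subposet inclusions.
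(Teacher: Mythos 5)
Two preliminary remarks: the paper does not actually prove Theorem \ref{KSthm} here (it is quoted from \cite{KS}), but the proof of Proposition \ref{equivCM} reproduces the essential mechanism, so that is what your proposal should be measured against. Against that standard, your outline has a genuine gap at its center. The cohomological criterion for Koszulity of $R_\Gamma$ (Theorem 2.7 of \cite{KS}, as invoked in Proposition \ref{equivCM}) is the vanishing of $H^m(R_\Gamma(\bullet,j),d_\Gamma)$ for \emph{all} $j$, and the topological translation (Theorem 4.2 and Lemma 6.3 of \cite{KS}) identifies these groups with $\tilde H^{m-j}(\Delta(\Gamma_{x,\,rk_\Gamma(x)-j}))$, i.e.\ with the cohomology of the rank-truncated lower intervals $\Gamma_{x,k}$ for every $k$. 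Your complex of saturated chains $*\to x_1\to\cdots\to x_n=x$ captures only the $j=0$ slice, namely the full bottom intervals $(*,x)=\Gamma_{x,\,rk_\Gamma(x)}$. The asserted equivalence ``CBS for all $(*,x)$ iff Koszul'' is therefore not correct: the conditions for $k<rk_\Gamma(x)$ involve subposets that are not open intervals of $\Gamma$ and are not consequences of CBS for the bottom intervals, and uniformity does not repair this (uniformity of $\Gamma$ does not even imply uniformity of its closed intervals).

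The second gap is the reduction from bottom intervals to arbitrary open intervals $(a,b)$. You propose that $[a,b]$ yields a subquotient $R_{[a,b]}$ of $R_\Gamma$ that inherits Koszulity. Indeed $R_{[a,b]}$ is a quotient of $R_\Gamma$ by a degree-one-generated ideal, but Koszulity does not pass to such quotients in general, and establishing it for $R_{[a,b]}$ is essentially equivalent to the theorem you are trying to prove (it follows \emph{from} the theorem, via the fact that closed intervals of Cohen--Macaulay posets are Cohen--Macaulay, but cannot be assumed on the way to it). The actual mechanism is topological rather than algebraic: one inducts on rank and uses the long exact sequence of the pair $(\Delta(\Gamma_{b,m}),\Delta(\Gamma_{b,m-1}))$ together with the excision-type identification
\[
H^n(\Delta(\Gamma_{b,m}),\Delta(\Gamma_{b,m-1}))\cong\bigoplus_{a\in S_b(m-1)}\tilde H^{n-1}(\Delta((a,b)))
\]
(Lemma 6.4 of \cite{KS}, reproduced in the proof of Proposition \ref{equivCM}) to convert vanishing for the truncations $\Gamma_{b,m}$ into the CBS property for every open interval $(a,b)$. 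Your sketch of the forward direction (Cohen--Macaulay implies uniform, and Koszulity via the cohomological criterion) is sound in outline, but as written the converse does not close.
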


In this paper we define a new generalization of the Cohen-Macaulay property: weakly Cohen-Macaulay. The main theorem is as follows.

\begin{thm}\label{chap4thm}
Let $\Gamma$ be finite ranked cyclic poset. Then $\Gamma$ is weakly Cohen-Macaulay if and only if $R_{\Gamma}$ is Koszul. 
\end{thm}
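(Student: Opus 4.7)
The plan is to adapt the proof of Theorem~\ref{KSthm} from~\cite{KS}, which identifies the cohomology of the Koszul complex of $R_\Gamma$ with reduced simplicial cohomology of order complexes of open intervals of $\Gamma$. The key new feature is the removal of the uniform hypothesis from the right-hand side of the equivalence, so the combinatorial condition on $\Gamma$ must be broadened: this is the content of the new definition \emph{weakly Cohen-Macaulay}, which is designed to match Koszulity of $R_\Gamma$ exactly while permitting disconnected open subintervals that are forbidden under Definition~\ref{CMdef}.

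Concretely, I would first write down the quadratic dual $R_\Gamma^!$ from Definition~\ref{RGammadef} and set up the Koszul complex $K^\bullet(R_\Gamma) \otimes_{R_\Gamma} \F$. A basis for the degree-$n$ summand is indexed by length-$n$ saturated chains in $\Gamma \setminus \{*\}$, and the differential is the alternating sum of cover contractions. Grouping chains by their top element $x \in \Gamma$ decomposes the Koszul complex into local subcomplexes; a short calculation, as in~\cite{KS}, identifies the cohomology of the $x$-summand with a shift of $\tilde{H}^{\ast}(\Delta((*,x)), \F)$, and localizing to subintervals $(a,b)$ recovers $\tilde{H}^\ast(\Delta((a,b)), \F)$. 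Thus Koszulity of $R_\Gamma$ is equivalent to a prescribed family of vanishings of these reduced cohomology groups as $(a,b)$ varies.

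Given this dictionary, the next step is to read off the precise combinatorial condition on $\Gamma$ that encodes these vanishings; this should be the definition of weakly Cohen-Macaulay. When $\Gamma$ is uniform the condition must collapse to Definition~\ref{CMdef}, so that Theorem~\ref{KSthm} is recovered; when $\Gamma$ is non-uniform the condition should be weaker in controlled ways — in particular, $\tilde{H}^0(\Delta((a,b)), \F)$ may be nonzero, corresponding to disconnection of the order complex, provided the rank gap and cover structure between $a$ and $b$ compensate appropriately. With the definition in place, both implications of the theorem become verifications that the combinatorial hypothesis gives the correct cohomology vanishing, and vice versa.

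The main obstacle will be making the new definition precise enough to yield a genuine equivalence. The forward direction (weakly Cohen-Macaulay $\Rightarrow$ Koszul) should reduce to constructing a linear resolution of the trivial module over $R_\Gamma$ whose terms are indexed by the chain data of $\Gamma$, with exactness driven by the hypothesized cohomology vanishing. The reverse direction (Koszul $\Rightarrow$ weakly Cohen-Macaulay) should reduce to extracting that cohomology vanishing from acyclicity of the Koszul complex. The real work lies in the bookkeeping for non-uniform $\Gamma$, where the Koszul complex picks up extra contributions beyond those tracked in~\cite{KS}; these must be organized so that the non-uniform and disconnected phenomena cancel in precisely the way captured by the weakly Cohen-Macaulay property.
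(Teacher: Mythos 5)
Your outline defers the two points that constitute the actual content of the theorem, and the dictionary you propose to build is the wrong one. You assert that Koszulity of $R_\Gamma$ translates into ``a prescribed family of vanishings of the reduced cohomology groups $\tilde{H}^{\ast}(\Delta((a,b)),\F)$ as $(a,b)$ varies.'' Any condition of that shape is a Cohen--Macaulay-type condition, and Example \ref{ex4.1} shows no such condition can characterize Koszulity: $R_{\Theta^{\ast}}$ is Koszul even though $\|\Delta(\Theta^{\ast}_{x,4})\|$ is homotopy equivalent to $S^1$, i.e.\ an open interval of $\Theta^{\ast}$ has nonvanishing reduced cohomology below top degree. The correct condition is not that individual groups $\tilde{H}^{\ast}(\Delta((a,b)))$ vanish, but that certain cochain complexes \emph{assembled from} these groups are exact: the complexes $S^{\bullet}(\Gamma(W,k))$, whose terms are direct sums of groups $\tilde{H}^{n-1}(\Delta((*,y)))$ and whose differential is induced by the covering relation (the $d^1$ differential of the spectral sequence of Section \ref{spectralsequence}). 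Your plan never produces this differential, so the ``family of vanishings'' you intend to read off does not exist in the form you describe.

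The second missing ingredient is the indexing set over which exactness must be imposed. In the uniform case the level sets $\Gamma_x(n)$ suffice; in general one needs the maximally linked subsets $W\in M_n(\Gamma_x)$, which arise from an induction on right annihilators: Theorem \ref{rightannihilator} shows, via a Koszul filtration, that $R_\Gamma$ is Koszul iff $rann_{R_\Gamma}(r_W)$ is linearly generated for all $W$ in a recursively defined tree of equivalence classes; Lemma \ref{ralem1} reduces to linked components; Lemma \ref{ralem2} converts each such condition into exactness of $R_{\Gamma_W}(\bullet,k)$; and Corollary \ref{usefulcor} identifies that complex with $S^{\bullet}(\Gamma(W,k))$. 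Your decomposition of the Koszul complex by the top element of a chain only sees the intervals $(*,x)$ and cannot detect which sub-posets $\Gamma_W$ must be tested. Finally, since the theorem refers to a definition already fixed in Section \ref{weaklyCM}, a proof ending with ``read off the precise combinatorial condition; this should be the definition of weakly Cohen--Macaulay'' is circular as a proof of the stated equivalence: you must verify that the condition you extract coincides with $H^{k-1}(S^{\bullet}(\Gamma(W,k)))=0$ for all $W\in M_n(\Gamma_x)$, and that verification is precisely the chain of lemmas above.
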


This paper is organized as follows. Section \ref{defprelim} gives important background information. Sections \ref{genrightann} through \ref{anise} build the necessary machinery for our main theorem. Section \ref{weaklyCM} gives the definition of weakly Cohen-Macaulay and the statement of our main theorem. Finally, Section \ref{examples2} gives important examples and remarks.  

 
\section{Definitions and preliminaries}\label{defprelim}

\begin{defn}  
Let $\Gamma$ be a poset with unique minimal element $*$ and strict order $<$.  We say $\Gamma$ is {\it ranked} if for all $b\in \Gamma$, any two maximal chains in $[*,b]$ have the same length.  The length of such a maximal chain is then referred to as the  {\it rank} of $b$ and written $rk_\Gamma(b)$. Set $\Gamma_+ = \Gamma \setminus \{*\}$. Let $\Gamma(k)$ be the elements of $\Gamma$ of rank $k$.  

(1) $\Gamma$ is pure of rank $d$ if $rk_\Gamma(x)= d$ for every maximal element of $\Gamma$.  

(2) If $\Gamma$ is pure, then $\overline{\Gamma}$ is the poset $\Gamma$ adjoined with a unique maximal element.

(3) If $\Gamma$ is pure, then $\Gamma'$ is the poset $\Gamma \setminus \Gamma(rk(\Gamma))$.

(4) $\Gamma_x$ denotes the interval $[*,x]$ in $\Gamma$.

(5) $\Gamma$ is cyclic if $\Gamma = \Gamma_x$ for some $x\in \Gamma$.

(6) For any $x\in \Gamma$, $S_x(k) = \{ y\in \Gamma_x\, |\, rk_\Gamma(y) = rk_\Gamma(x) -k \}$.

(7) For each $k \geq 0$ we set $\Gamma^{> k} = \{y \in \Gamma | rk_{\Gamma}(y) > k\} \cup \{*\}$.

\noindent For any $a<b$, we say that $b$ covers $a$, written $b\to a$, if the closed interval $[a,b]$ has order 2, or equivalently $a\in S_b(1)$. This makes $\Gamma$ into a directed graph that is often referred to as a {\it layered} graph.
\end{defn}

The above definition of ranked poset is taken from \cite{GRSW}, a fundamental paper in the area of splitting algebras. We note that this definition differs from the traditional one wherein every maximal chain has the same length (c.f. \cite{StanleyEC}). 

From now forward, $\Gamma$ denotes a finite ranked poset and we will assume that $\Gamma$ has unique minimal element $*$ (without explicitly stating so). 

We recall the definition of {\it uniform} from \cite{GRSW}.

\begin{defn}\label{uniform} 
For $x\in \Gamma_+$ and $a,b \in S_x(1)$, write
$a\sim_x b$ if there exists $c\in S_a(1) \cap S_b(1)$ and extend $\sim_x$ to an equivalence relation on $S_x(1)$.   We say 
$\Gamma$ is {\it uniform} if, for every $x\in \Gamma_+$, $\sim_x$ has a unique equivalence class.  
\end{defn}

The {\it order complex} of a finite poset is a standard tool in combinatorial topology and we refer the reader to \cite{KS} for basic definitions. We denote the order complex of $\Gamma$ by $\Delta(\Gamma)$. The order complex of $\Gamma$ is a simplicial complex and its {\it geometric realization}, or {\it total space} is denoted $||\Delta(\Gamma)||$. All cohomology groups are all calculated with coefficients in our base field, $\F$.  


Recall from Definition \ref{CMdef} that a finite ranked cyclic poset is Cohen-Macaulay relative to a field $\F$ if the order complex of any open subinterval $(a,b)$ has non-zero reduced cohomology only in the degree equal to its dimension. Before giving an equivalent definition of Cohen-Macaulay, we recall notation from \cite{RSW4}.

\begin{defn} For any $a\in \Gamma$ and $1\le i\le rk_\Gamma(a)$ we set 
$$\Gamma_{a,i} = \{ w<a \,|\, rk_\Gamma(a)-rk_\Gamma(w) \le i-1\} = \Gamma^{>rk_\Gamma(a)-i} \cap (*,a)$$
\end{defn}

\noindent From \cite{KS}, we note: $\Gamma_{a,i}$ is a subposet of $[*,a)$ and the dimension of $\Delta(\Gamma_{a,i})$ is 
$i-2$.  Also, $\Gamma_{a,1} = \emptyset$, $\Gamma_{a,2} = S_1(a)$ and $\Gamma_{a,rk_\Gamma(a)} = (*,a)$.

From \cite{SheltonII}, we now give an equivalent definition of uniform.

\begin{prop}
Let $\Gamma$ be a finite ranked poset. Then $\Gamma$ is uniform if and only if for all $x \in \Gamma_+$ of rank at least three, $\Gamma_{x, 3}$ is connected as a graph.
\end{prop}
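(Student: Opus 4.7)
The plan is to interpret $\Gamma_{x,3}$ explicitly as a bipartite graph and directly translate connectivity in this graph into statements about $\sim_x$. By definition $\Gamma_{x,3} = S_x(1) \cup S_x(2)$, and the graph structure inherited from the layered graph $\Gamma$ consists of the cover relations $a \to c$ with $a \in S_x(1)$ and $c \in S_x(2)$; there are no edges inside $S_x(1)$ or inside $S_x(2)$, so $\Gamma_{x,3}$ is bipartite. Before focusing on the rank-$\ge 3$ case, note that if $rk_\Gamma(x) = 1$ then $S_x(1) = \{*\}$ and $\sim_x$ trivially has one class, while if $rk_\Gamma(x) = 2$ any two $a,b \in S_x(1)$ satisfy $S_a(1) \cap S_b(1) = \{*\}$, so again $\sim_x$ has a single class. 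Hence uniformity reduces precisely to the statement that $\sim_x$ has a unique class for every $x$ of rank at least three.

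Now fix $x$ with $rk_\Gamma(x) \ge 3$. For the forward direction, suppose $\sim_x$ has a unique class. Given $a, b \in S_x(1)$, transitivity of $\sim_x$ supplies a sequence $a = a_0, a_1, \dots, a_n = b$ in $S_x(1)$ together with witnesses $c_i \in S_{a_{i-1}}(1) \cap S_{a_i}(1) \subseteq S_x(2)$, and this is literally a walk $a_0 - c_1 - a_1 - c_2 - \cdots - c_n - a_n$ in $\Gamma_{x,3}$. To rule out isolated vertices in $S_x(2)$, recall that in a ranked poset (in the sense used here) a cover $u \to v$ forces $rk_\Gamma(v) = rk_\Gamma(u) + 1$; hence any saturated chain from $c \in S_x(2)$ up to $x$ has length exactly $2$, so $c$ is covered by some $a \in S_x(1)$. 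Thus every vertex of $\Gamma_{x,3}$ is connected to some vertex of $S_x(1)$, and connectivity of $\Gamma_{x,3}$ follows.

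For the reverse direction, suppose $\Gamma_{x,3}$ is connected and pick $a, b \in S_x(1)$. A path in $\Gamma_{x,3}$ from $a$ to $b$, by bipartiteness, necessarily has the form
\[
a = a_0 - c_1 - a_1 - c_2 - \cdots - c_n - a_n = b,
\]
with $a_i \in S_x(1)$ and $c_i \in S_x(2)$. Each intermediate $c_i$ lies in $S_{a_{i-1}}(1) \cap S_{a_i}(1)$ and thus witnesses $a_{i-1} \sim_x a_i$, so $a \sim_x b$ by transitivity. This shows $\sim_x$ has a unique class, completing the proof.

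The argument is mostly bookkeeping; the only subtlety worth flagging is the no-isolated-vertex step in $S_x(2)$, which is where the ranked hypothesis is used in an essential way (to guarantee that covers change rank by exactly one, hence that the alternating-walk description of connectivity in $\Gamma_{x,3}$ matches the generators of $\sim_x$ on the nose).
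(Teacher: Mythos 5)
The paper states this proposition only as a citation to \cite{SheltonII} and supplies no proof of its own, so there is nothing to compare against; your argument is correct and complete, identifying $\Gamma_{x,3}$ with the bipartite cover graph between $S_x(1)$ and $S_x(2)$, matching its walks with the generators of $\sim_x$, and supplying the two points that actually need checking, namely that ranks $1$ and $2$ are automatic and that no vertex of $S_x(2)$ is isolated. One trivial slip: with the paper's convention that $u \to v$ means $u$ covers $v$, the rank relation should read $rk_\Gamma(v) = rk_\Gamma(u) - 1$ rather than $+1$; this does not affect the argument.
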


Similarly, we give an equivalent definition of Cohen-Macaulay. This statement and its proof are slightly different from those in \cite{SheltonII}. 

\begin{prop}\label{equivCM}
Let $\Gamma$ be a finite ranked cyclic poset. Then $\Gamma$ is Cohen-Macaulay if and only if for all $x \in \Gamma_+$ and all $rk_{\Gamma}(x) \geq k > n$, $\tilde{H}^{n-2}(\Delta(\Gamma_{x, k})) = 0$.
\end{prop}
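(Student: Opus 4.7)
The plan is to prove the two implications separately. I first note that, by convention, reduced cohomology above the dimension of a simplicial complex vanishes, so the Cohen-Macaulay condition is equivalent to requiring $\tilde{H}^m(\Delta((a,b)))=0$ for every $a<b$ and every $m<\dim\Delta((a,b))=rk_\Gamma(b)-rk_\Gamma(a)-2$. Similarly, after the reindexing $m=n-2$ and using $\dim\Delta(\Gamma_{x,k})=k-2$, the statement of the proposition is precisely that $\Delta(\Gamma_{x,k})$ has vanishing below-top reduced cohomology for every $x\in\Gamma_+$ and every $2\le k\le rk_\Gamma(x)$. Thus the claim is an equivalence between \emph{below-top vanishing on every open interval $(a,b)$} and \emph{below-top vanishing on every rank-truncation $\Gamma_{x,k}$.}

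For the forward direction, fix $x\in\Gamma_+$. Since every open subinterval of $[*,x]$ is also an open subinterval of $\Gamma$, the cyclic poset $[*,x]$ inherits the Cohen-Macaulay property. The bounded graded poset whose proper part is $\Gamma_{x,k}$ is precisely the rank-selected subposet of $[*,x]$ retaining $*$, $x$, and the ranks $rk_\Gamma(x)-k+1,\ldots,rk_\Gamma(x)-1$. By the classical rank-selection theorem of Baclawski--Stanley, a rank-selected subposet of a Cohen-Macaulay poset is Cohen-Macaulay, so $\Delta(\Gamma_{x,k})$ is a CBS and $\tilde{H}^{n-2}(\Delta(\Gamma_{x,k}))=0$ whenever $n<k$.

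For the backward direction, assume the stated cohomology vanishing and induct on $rk_\Gamma(a)$ to show that $\tilde{H}^m(\Delta((a,b)))=0$ for all $m<\dim\Delta((a,b))$ and all $a<b$. The base case $a=*$ is immediate, since $(*,b)=\Gamma_{b,rk_\Gamma(b)}$. For the inductive step with $rk_\Gamma(a)\ge 1$, set $s=rk_\Gamma(b)-rk_\Gamma(a)$ and observe that $a$ lies in $\Gamma_{b,s+1}$ at its minimal rank. Consequently, every element of $\Gamma_{b,s+1}$ comparable to $a$ lies in $(a,b)$, so the link of $a$ in $\Delta(\Gamma_{b,s+1})$ equals $\Delta((a,b))$. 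A Mayer--Vietoris argument applied to the decomposition $\Delta(\Gamma_{b,s+1})=\Delta(\Gamma_{b,s+1}\setminus\{a\})\cup\mathrm{star}(a)$, using that $\mathrm{star}(a)$ is contractible and that its intersection with $\Delta(\Gamma_{b,s+1}\setminus\{a\})$ is the link $\Delta((a,b))$, expresses the cohomology of $\Delta((a,b))$ in terms of that of $\Delta(\Gamma_{b,s+1})$ (which has the required vanishing by hypothesis) and that of $\Delta(\Gamma_{b,s+1}\setminus\{a\})$.

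The main obstacle is controlling $\Delta(\Gamma_{b,s+1}\setminus\{a\})$, which is not of the form $\Delta(\Gamma_{y,\ell})$. My plan is to iterate: successively delete the remaining rank-$rk_\Gamma(a)$ elements $a_1,a_2,\ldots$ from $\Gamma_{b,s+1}\setminus\{a\}$, eventually reducing to $\Gamma_{b,s}$, whose cohomology is again controlled by the hypothesis. Each deletion introduces a link of the form $\Delta((a_i,b))$, and because $rk_\Gamma(a_i)=rk_\Gamma(a)$, these auxiliary intervals have the same rank-gap $s$ and so fall outside the outer induction on $rk_\Gamma(a)$. I would therefore strengthen the induction by proving the vanishing for all $\Delta((a',b))$ with $rk_\Gamma(b)-rk_\Gamma(a')=s$ simultaneously, via the filtration $\Delta(\Gamma_{b,s})\subset\Delta(\Gamma_{b,s+1})$ and its associated spectral sequence whose $E_1$-page assembles the suspended cohomologies of the various $\Delta((a_i,b))$. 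Convergence to the below-top-vanishing cohomology of $\Delta(\Gamma_{b,s+1})$ then forces the required vanishing on each $\Delta((a_i,b))$. Managing this simultaneous bookkeeping is the technical crux of the argument.
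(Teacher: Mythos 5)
Your forward direction is correct and takes a genuinely different route from the paper. The paper deduces it from Theorem \ref{KSthm}: Cohen--Macaulay implies $R_{\Gamma_x}$ is Koszul, which by Theorem 2.7 of \cite{KS} forces the vanishing of certain cohomology groups $H_{\Gamma_x}(m,j)$ that Theorem 4.2 and Lemma 6.3 of \cite{KS} identify with $H^{m-j}(\Delta(\Gamma_{x,rk_\Gamma(x)-j}))$. You instead observe that $\{*\}\cup\Gamma_{x,k}\cup\{x\}$ is a rank-selected subposet of the bounded graded poset $[*,x]$ and invoke the Baclawski--Stanley rank-selection theorem. This is a clean, purely combinatorial-topological argument that bypasses the Koszul machinery entirely; what it costs is the importation of a nontrivial classical theorem where the paper imports its own prior results. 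Both are legitimate, and yours is arguably more transparent about \emph{why} the statement is true.

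Your backward direction, however, has a gap that you yourself flag as the ``technical crux,'' and it is worth saying precisely what resolves it, because the resolution makes most of your scaffolding unnecessary. The missing ingredient is the decomposition of the relative cohomology of the pair you eventually filter by: since the elements of $S_b(s)$ are pairwise incomparable and sit at the minimal rank of $\Gamma_{b,s+1}$, every chain of $\Gamma_{b,s+1}$ not contained in $\Gamma_{b,s}$ contains exactly one such element, necessarily as its minimum, and hence
\[ H^{j}\bigl(\Delta(\Gamma_{b,s+1}),\Delta(\Gamma_{b,s})\bigr)\;\cong\;\bigoplus_{a\in S_b(s)}\tilde{H}^{j-1}\bigl(\Delta((a,b))\bigr), \]
which is the paper's analogue of Lemma 6.4 of \cite{KS}. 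Once this is stated, the ordinary long exact sequence of the pair sandwiches the relative group in degree $j$ between $\tilde{H}^{j-1}(\Delta(\Gamma_{b,s}))$ and $\tilde{H}^{j}(\Delta(\Gamma_{b,s+1}))$, both of which vanish for $j\le s-2$ by hypothesis; the direct sum then kills each $\tilde{H}^{i}(\Delta((a,b)))$ for $i\le s-3=\dim\Delta((a,b))-1$ individually. This handles \emph{all} $a$ with $rk_\Gamma(b)-rk_\Gamma(a)=s$ at once and applies verbatim with any $c\in\Gamma_+$ in place of the top element, so no induction on $rk_\Gamma(a)$, no vertex-by-vertex deletion with Mayer--Vietoris, and no strengthened simultaneous induction are needed. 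Your instinct to pass from single-vertex links to the two-step filtration was the right one; you just stopped one lemma short of the argument closing.
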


\begin{proof}
Assume $\Gamma$ is Cohen-Macaulay and let $x \in \Gamma_+$. By Theorem \ref{KSthm}, $R_{\Gamma}$ is Koszul, from which it follows that $R_{\Gamma_x}$ is Koszul. Applying Theorem 2.7 from \cite{KS}, we see $H_{\Gamma_x}(m, j) =0$ for all $rk_{\Gamma}(x) > m > j \geq 0$. Theorem 4.2 together with Lemma 6.3 from \cite{KS} tell us that $H_{\Gamma_x}(m, j) = H^{m-j}(\Delta(\Gamma_{x, rk_{\Gamma}(x) - j}))$ for all $rk_{\Gamma}(x) > m \geq j \geq 0$, which completes the proof of the forward direction.

For the reverse direction, we proceed by induction on the rank of $\Gamma = \Gamma_b$. For a poset of rank one, there is nothing to show; we assume $\Gamma$ has rank $d+1$ with $d > 0$. 

By the inductive hypothesis, $\Gamma_a$ is Cohen-Macaulay for all $a < b$ in $\Gamma$. Let $m \leq rk_{\Gamma}(b)$. We observed that $\Delta(\Gamma_{b, m-1})$ is a closed subspace of $\Delta(\Gamma_{b, m})$. We obtain the standard long exact sequence
\[ \cdots \rightarrow \tilde{H}^n(\Delta(\Gamma_{b, m-1})) \rightarrow \tilde{H}^{n+1}(\Delta(\Gamma_{b, m}), \Delta(\Gamma_{b, m-1})) \rightarrow \tilde{H}^{n+1}(\Delta(\Gamma_{b, m})) \rightarrow \cdots. \]
Then, by assumption, $\tilde{H}^{n}(\Delta(\Gamma_{b, m}), \Delta(\Gamma_{b, m-1})) = 0$ for all $n \leq m - 2$. Similar to Lemma 6.4 from \cite{KS}, we get
\[ H^n(\Delta(\Gamma_{b, m}), \Delta(\Gamma_{b, m-1})) = \bigoplus_{a \in S_b(m-1)} \tilde{H}^{n-1}(\Delta((a, b))). \]
This implies
\[ \bigoplus_{a \in S_b(m-1)} \tilde{H}^{j}(\Delta((a, b))) = 0 \]
for all $j \leq m-3$. This implies $\Gamma$ is Cohen-Macaulay. 
\end{proof}

We now turn our attention to $R_{\Gamma}$ form Definition \ref{RGammadef}. Cassidy, Phan, and Shelton proved the following lemma in \cite{CPS}. This lemma is a very powerful tool; we will use it repeatedly and without further comment. We note that $\Gamma$ need not be uniform. 

\begin{lemma}[\cite{CPS}, (3.1)]\label{strongideal}
Let $\Gamma$ be a finite ranked poset. Then
\[ (R_{\Gamma})_+ = \bigoplus_{x \in \Gamma_+} r_xR_{\Gamma}. \]
\end{lemma}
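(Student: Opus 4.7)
The plan is to lift the problem to the free algebra $T = T(V)$ on $V = \mathrm{span}_{\F}\{r_x : x \in \Gamma_+\}$, where one has the canonical decomposition $T_+ = \bigoplus_{x \in \Gamma_+} r_x T$ coming from the fact that every nonzero monomial has a unique first letter. Writing $R_\Gamma = T/I$ for the two-sided ideal $I$ generated by the defining relations, it suffices to show that $I$ is compatible with this decomposition in the sense that $I \cap T_+ = \bigoplus_x (I \cap r_x T)$; the lemma will then descend to the quotient.

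The structural observation I would exploit is that every defining relation has a \emph{single} first letter: both $r_x\bigl(\sum_{x \to y} r_y\bigr)$ and $r_x r_w$ (for $x \not\to w$) lie in $r_x \otimes V$ for one specific $x$. Thus the space of quadratic relations decomposes as $R = \bigoplus_x R_x$ with $R_x \subseteq r_x \otimes V$. Any element of $I$ can be written as a finite sum $\sum_i t_i \rho_i t'_i$ with $t_i, t'_i$ monomials in $T$ and $\rho_i$ a generating relation in some $R_{x_i}$. Each summand $t_i \rho_i t'_i$ has a well-defined first letter: if $t_i$ is a scalar, the first letter is $r_{x_i}$; otherwise it is the first letter of the monomial $t_i$. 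Grouping the summands of any $g \in I$ by first letter writes $g = \sum_y g_y$ with each $g_y \in I \cap r_y T$, which establishes $I \cap T_+ = \bigoplus_y (I \cap r_y T)$.

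With this in hand, the lemma follows formally: given $\sum_x r_x f_x = 0$ in $R_\Gamma$ with $f_x \in R_\Gamma$, lift each $f_x$ to $\tilde f_x \in T$, so that $\sum_x r_x \tilde f_x \in I$; the direct-sum decomposition of $I$ forces each $r_x \tilde f_x \in I$, i.e.\ $r_x f_x = 0$ in $R_\Gamma$. Since $(R_\Gamma)_+ = \sum_x r_x R_\Gamma$ is immediate from the fact that $R_\Gamma$ is generated by the $r_x$, this completes the argument. The only subtlety—and the main point to verify carefully—is that multiplying a relation on the \emph{left} by a non-scalar monomial does not disturb the first letter of the resulting term, which is why the bookkeeping by first letter extends from a set of generators of $I$ to all of $I$.
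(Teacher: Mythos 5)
Your argument is correct, and it is essentially the standard proof of this fact: the paper itself does not reproduce a proof (it only cites \cite{CPS}, (3.1)), and the argument given there rests on exactly the observation you isolate, namely that every defining relation lies in $r_x\otimes V$ for a single $x$, so the relation ideal respects the canonical first-letter decomposition $T_+=\bigoplus_x r_xT$ of the tensor algebra and the decomposition descends to the quotient. You have also correctly flagged and disposed of the one subtlety (left multiplication by a nontrivial monomial does not change the first letter of a term), so nothing needs to be added.
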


We adopt the following notation and conventions from \cite{KS}. 

\begin{defn}[\cite{KS}, (2.5)]\label{dGamma} Let $\Gamma$ be a finite ranked poset. 

(1) $d_\Gamma = \sum\limits_{*\ne x\in \Gamma} r_x \in R_{\Gamma,1}$.  Also let $d_\Gamma$ denote the function 
$d_\Gamma: R_\Gamma \to R_\Gamma$ given by left (but never right) multiplication by $d_\Gamma$.  

(2) For all $n\ge k\ge 0$, set $R_\Gamma(n,k) = \sum\limits_{rk_\Gamma(y) = n+1} r_y R_{\Gamma,n-k}$.

\end{defn}

We note that the space $R_\Gamma(n,k)$ denotes the span of such monomials with $rk_\Gamma(b_1) = n+1$ and $rk_\Gamma(b_j) = k+1$ and the degree of such a monomial is $n-k+1$. Since $(d_\Gamma)^2 = 0$, for each $k\ge 0$ we have a cochain complex:
$$ \cdots R_\Gamma(n-1,k) \xrightarrow{d_\Gamma} R_\Gamma(n,k)  \xrightarrow{d_\Gamma} R_\Gamma(n+1,k) \cdots. $$




\section{The Right Annihilator Condition}\label{genrightann}

Throughout the remainder of this paper $\Gamma$ denotes an arbitrary finite ranked poset with unique minimal element $*$ and rank $m+1$.

We remind the reader of the following fact from Section 3 in \cite{CPS}: for a uniform $\Gamma$, the Koszulity of $R_\Gamma$ is equivalent to a condition on right annihilators of certain elements of $R_{\Gamma}$. In this section, we establish a more general result. We first need some notation and some new combinatorial objects.

\begin{defn}
Let $u \in R_{\Gamma}$. Define the linearly generated right annihilator of $u$, $ L_{\Gamma}(u) := (rann_{R_{\Gamma}}(u))_1R_{\Gamma}$. 
\end{defn}

\begin{defn}\label{r_Snotation}
Let $1 \leq n \leq m+1$ and $W \subset \Gamma(n)$. Then
\[ r_W := \sum_{z \in W} r_z. \] 
For all $x \in \Gamma_+$ we write $r_{\{x\}} = r_x$. Suppose $u, v \in \Gamma_+$. We write $u \sim^W v$ if for all
\[ q =  \sum_{z \in \Gamma_+} q_z r_z, \hspace{.3in} (q_z \in \F) \] 
$r_Wq = 0$ implies $q_u = q_v$. 
\end{defn}

We note $\sim^W$ defines an equivalence relation on $\Gamma_+$. Equivalence classes will be denoted by $[ - ]^W$ and thus
\[ r_{[z_0]^W} = \sum _{z \in [z_0]^W} r_z. \] 

\begin{prop}\label{Ldefinition}
Let $n$ and $W$ be given as in Definition \ref{r_Snotation}. Then  
\[ L_{\Gamma}(r_W) = \bigoplus_{[z_0]^W} r_{[z_0]^W} R_{\Gamma}. \] 
\end{prop}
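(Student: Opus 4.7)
The plan is to compute $(\mathrm{rann}_{R_\Gamma}(r_W))_1$ explicitly, show that it equals the $\F$-span of the elements $r_{[z_0]^W}$, and then deduce directness of the sum $\sum r_{[z_0]^W} R_\Gamma$ from Lemma \ref{strongideal}. The computation proceeds as follows: for an arbitrary $q = \sum_{z \in \Gamma_+} q_z r_z \in R_{\Gamma,1}$, the relations $r_y r_z = 0$ for $y \not\to z$ reduce $r_W q$ to $\sum_{y \in W}\sum_{z \in S_y(1)} q_z\, r_y r_z$. Lemma \ref{strongideal} gives $R_{\Gamma,+} = \bigoplus_{y} r_y R_\Gamma$, so $r_W q = 0$ is equivalent to the vanishing of each $y$-summand inside $r_y R_\Gamma$. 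Since the only quadratic relation lying inside $r_y R_\Gamma$ is the defining one $r_y \sum_{z \in S_y(1)} r_z = 0$, this vanishing holds precisely when $(q_z)_{z \in S_y(1)}$ is a constant vector. Thus $q \in (\mathrm{rann}(r_W))_1$ if and only if, for each $y \in W$, there exists $\lambda_y \in \F$ with $q_z = \lambda_y$ for all $z \in S_y(1)$, with $q_z$ left unconstrained for $z \notin \bigcup_{y \in W} S_y(1)$.

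From this characterization I read off the $\sim^W$-classes: they are the connected components of the graph on $\bigcup_{y \in W} S_y(1)$ in which two vertices are adjacent iff they share a common cover in $W$, together with singletons $\{u\}$ for $u$ outside this union. In particular, each $S_y(1)$ sits entirely inside a single class, so the defining relation $r_y\sum_{z \in S_y(1)} r_z = 0$ immediately yields $r_W r_{[z_0]^W} = 0$ for every class, giving the inclusion $\supseteq$. Conversely, any $q$ whose coefficients are constant on every class fits the description above, so $(\mathrm{rann}(r_W))_1 = \bigoplus_{[z_0]^W} \F \cdot r_{[z_0]^W}$, and multiplying on the right by $R_\Gamma$ gives $L_\Gamma(r_W) = \sum_{[z_0]^W} r_{[z_0]^W} R_\Gamma$.

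To finish, I would verify that this sum is direct: if $\sum_i r_{E_i} b_i = 0$ for distinct classes $E_i$, expand to $\sum_i \sum_{z \in E_i} r_z b_i = 0$ and apply Lemma \ref{strongideal} once more; since the $E_i$ are pairwise disjoint subsets of $\Gamma_+$, each $r_z b_i$ must vanish individually, so $r_{E_i} b_i = 0$ for every $i$. The main obstacle, modest as it is, lies in the first step: one must carefully justify that the only quadratic relation contributing to the $r_y R_\Gamma$-summand is the single "row-sum" relation, which is exactly what the direct-sum decomposition of Lemma \ref{strongideal} ensures. Once that is in hand, the rest is a routine translation between the algebraic description of the annihilator and the combinatorial equivalence $\sim^W$.
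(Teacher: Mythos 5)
Your proof is correct and follows essentially the same route as the paper's: both reduce $r_Wq=0$ to the per-generator conditions via Lemma \ref{strongideal}, use that each $S_y(1)$ for $y\in W$ lies in a single $\sim^W$-class to get $r_Wr_{[z_0]^W}=0$, and obtain directness of the sum from the disjointness of the classes together with Lemma \ref{strongideal}. Your version merely makes explicit two things the paper leaves implicit, namely the description of the $\sim^W$-classes as connected components of the ``common cover in $W$'' graph and the verification that the sum is direct.
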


\begin{proof}
Suppose
\[ r_W \cdot \sum_{z \in \Gamma_+} q_zr_z = 0. \]
Then 
\[ \sum_{z \in \Gamma_+} q_z r_z = \sum_{[z_0]^W} q_{[z_0]^W} r_{[z_0]^W} \in\bigoplus_{[z_0]^W} r_{[z_0]^W} R_{\Gamma}. \]
We also need to show $r_{[z_0]^W} \in rann_{R_\Gamma}(r_W)$ for all $[z_0]^W$. If $z_0 \in \Gamma \setminus \Gamma(n-1)$, then $r_{[z_0]^W} = r_{z_0}$ and $r_Wr_{z_0} = 0$. Now assume $z_0 \in \Gamma(n-1)$. We partition $W$ into two sets: $W_1 = \{ s \in W : s \rightarrow y \text{ for some } y \in [z_0]^W\}$ and $W_2$ is the compliment of $W_1$ in $W$. Then we compute
\[ r_W \cdot r_{[z_0]^W} = (r_{W_1} + r_{W_2}) r_{[z_0]^W} = r_{W_1}r_{[z_0]^W} = 0 \]
and this completes our proof. 
\end{proof}

\begin{defn}\label{ABHdef}
Let $n$ and $W$ be given as in Definition \ref{r_Snotation}. Let $U = \{u \in \Gamma(n-1) : u < s \text{ for some } s \in W\}$.  We define 
\[ A_{\Gamma}(r_W) := \bigoplus_{[z_0]^W \subseteq U} r_{[z_0]^W} R_{\Gamma}, \hspace{.4in} B_{\Gamma}(r_W) := \bigoplus_{z \in \Gamma(n-1) \setminus U} r_z R_{\Gamma} \]
and
\[ H_{\Gamma}(n-1) := \bigoplus_{z \in \Gamma_+ \setminus \Gamma(n-1)} r_zR_{\Gamma}. \]
\end{defn}

We observe:
\[ L_{\Gamma}(r_W) = A_{\Gamma}(r_W) \oplus B_{\Gamma}(r_W) \oplus H_{\Gamma}(n-1). \] 

\begin{defn}
Assume $\Gamma = \Gamma_x$. We define
\[ T_{m+1}(\Gamma) = \{ \Gamma(m+1) \} = \{ \{x\} \} \]
and recursively define for $m \geq i \geq 1$
\[ T_i(\Gamma) = \{[z_0]^W \subset \Gamma_+ | W \in T_{i+1} \text{ and there exists } s \in W \text{ such that } s \rightarrow z_0 \}. \]  
Then define
\[ T(\Gamma) = \bigcup_{i=1}^{m+1} T_i(\Gamma). \]
For an arbitrary finite ranked poset, we define
\[ \mathcal{T}(\Gamma) = \bigcup_{x \in \Gamma_+} T(\Gamma_x). \]
\end{defn}

\begin{rmk}\label{treerem} 
By (3.3) from \cite{CPS}, we see that if $\Gamma$ is uniform, then $\mathcal{T}(\Gamma)$ consists of sets of the form $\Gamma_x(n)$ where $x \in \Gamma_+$ and $1 \leq n \leq rk_{\Gamma}(x)$. 
\end{rmk}

The following lemma is (3.3) of \cite{CPS}, without the uniform hypothesis.

\begin{thm}\label{rightannihilator}
Let $\Gamma$ be a finite ranked poset. The algebra $R_{\Gamma}$ is Koszul if and only if for all $W \in \mathcal{T}(\Gamma)$, $rann_{R_{\Gamma}}(r_W) = L_{\Gamma}(r_W)$.
\end{thm}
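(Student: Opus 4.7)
The plan is to adapt the proof of (3.3) in \cite{CPS} and observe that the uniform hypothesis used there can be dropped provided one enlarges the indexing collection from $\{\Gamma_x(n) : x \in \Gamma_+,\, 1 \leq n \leq rk_\Gamma(x)\}$ (which by Remark \ref{treerem} equals $\mathcal{T}(\Gamma)$ in the uniform case) to the full tree $\mathcal{T}(\Gamma)$. One inclusion, $L_\Gamma(r_W) \subseteq rann_{R_\Gamma}(r_W)$, is automatic by Proposition \ref{Ldefinition}, so all the content is in the reverse inclusion and its relation to the existence of a linear resolution of $\F$.

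First I would recall that $R_\Gamma$ is Koszul iff the trivial right module $\F$ admits a minimal linear free resolution $P_\bullet \to \F$. By Lemma \ref{strongideal}, we may take $P_1 = \bigoplus_{x \in \Gamma_+} R_\Gamma e_x$ with differential $e_x \mapsto r_x$, whose first syzygy module is $\bigoplus_x rann_{R_\Gamma}(r_x)$. Linearity at this step is equivalent to $rann_{R_\Gamma}(r_x)$ being generated in degree one for all $x \in \Gamma_+$, which by Proposition \ref{Ldefinition} is precisely the annihilator condition for the singletons $\{x\} \in \mathcal{T}(\Gamma)$. Recursively, suppose the resolution has been constructed through homological degree $n$ with generators of $P_n$ indexed by certain $W \in \mathcal{T}(\Gamma)$. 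By Proposition \ref{Ldefinition}, $L_\Gamma(r_W) = \bigoplus_{[z_0]^W} r_{[z_0]^W} R_\Gamma$. If the annihilator condition holds for $W$, this yields a free summand $\bigoplus_{[z_0]^W} R_\Gamma e_{[z_0]^W}$ surjecting onto $rann(r_W)$ by the degree-one map $e_{[z_0]^W} \mapsto r_{[z_0]^W}$, and the new generators are indexed by the equivalence classes $[z_0]^W$, which lie in $\mathcal{T}(\Gamma)$ by the very recursive definition of $T_i(\Gamma)$. The closure of $\mathcal{T}(\Gamma)$ under the formation of $\sim^W$-equivalence classes is what makes the recursion close.

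With this machinery in place, both directions of the iff are short. If $R_\Gamma$ is Koszul, then the minimal resolution is linear at every step, and an induction on $n$ using the decomposition above forces $rann_{R_\Gamma}(r_W) = L_\Gamma(r_W)$ for every $W \in \mathcal{T}(\Gamma)$. Conversely, if the annihilator condition holds for every $W \in \mathcal{T}(\Gamma)$, then the recursive procedure above produces a genuine minimal free resolution of $\F$ whose differentials are all linear, so $R_\Gamma$ is Koszul. Minimality needs only the observation that the generators $r_{[z_0]^W}$ for distinct equivalence classes are linearly independent in $R_{\Gamma,1}$, which is immediate from the partition of $\Gamma_+$.

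The main obstacle is the bookkeeping in the recursive step, namely verifying that $\mathcal{T}(\Gamma)$ really is the correct indexing set and that the map onto $rann(r_W)$ given by the annihilator condition respects the decomposition across distinct cyclic subposets $\Gamma_x$. In the uniform case of \cite{CPS} each $[z_0]^W$ collapses to a whole rank set $\Gamma_x(n-1)$, so only one summand appears; without uniformity the finer partition into $\sim^W$-classes appears, and one must check that no two of these classes come from distinct cyclic subposets in a way that spoils the freeness of $P_{n+1}$. This is handled by the partition $W = W_1 \sqcup W_2$ used in the proof of Proposition \ref{Ldefinition}: the cover relation $s \to z_0$ keeps each class anchored inside the tree $T(\Gamma_x)$ where $W$ originated, so the recursive construction stays block-diagonal across the $x \in \Gamma_+$ and the formal argument of \cite{CPS} carries over verbatim.
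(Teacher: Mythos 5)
Your argument is correct, and its combinatorial engine --- Lemma \ref{strongideal}, Proposition \ref{Ldefinition}, and the closure of $\mathcal{T}(\Gamma)$ under passage to $\sim^W$-classes --- is the same one the paper uses; what differs is the homological packaging. The paper's forward direction runs a reverse induction down each tree $T(\Gamma_x)$ in the language of Koszul modules (via the short exact sequence $0 \to L_{\Gamma}(r_U) \to R_{\Gamma} \to r_U R_{\Gamma} \to 0$, the stability of Koszulity under direct summands, and the fact that a cyclic Koszul module $r_WR_\Gamma$ has linearly generated annihilator), and its reverse direction exhibits the ideals $\bigoplus_i r_{W_i}R_{\Gamma}$ with $W_i \in \mathcal{T}(\Gamma)$ pairwise disjoint as a Koszul filtration, using $(r_{W_m}:J) = rann_{R_{\Gamma}}(r_{W_m})$ and citing \cite{Pi}. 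Your single recursive construction of the minimal linear free resolution of $\F$ handles both implications at once and is more self-contained, at the price of extra bookkeeping: the generators of $P_{n+1}$ over a given $W$ include \emph{every} $\sim^W$-class, so for instance each singleton $\{z\}$ recurs at every homological degree, and in the forward direction the induction must be phrased as ``the resolution has the stated form through stage $n$'' before Koszulity can be invoked to linearize the next syzygy. Two points to tighten. First, the classes $[z_0]^W$ admitting no cover from $W$ are singletons, and they lie in $\mathcal{T}(\Gamma)$ not by the recursive clause defining $T_i$ (which requires some $s \in W$ with $s \to z_0$) but because $\{z_0\}$ is the unique element of $T_{rk_\Gamma(z_0)}(\Gamma_{z_0})$; without this remark the recursion does not visibly close. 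Second, your concern about interference between blocks coming from distinct $\Gamma_x$ is resolved simply by the fact that the $\sim^W$-classes partition $\Gamma_+$, so that Proposition \ref{Ldefinition} combined with Lemma \ref{strongideal} makes $L_{\Gamma}(r_W) = \bigoplus_{[z_0]^W} r_{[z_0]^W}R_{\Gamma}$ an honest direct sum and the kernel computation block-diagonal; the partition $W = W_1 \sqcup W_2$ from the proof of Proposition \ref{Ldefinition} serves only to verify $r_W r_{[z_0]^W} = 0$ and is not the relevant mechanism here.
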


\begin{proof}
For the forward direction, assume $R_{\Gamma}$ is Koszul. Then the right $R_{\Gamma}$-modules $(R_{\Gamma})_+$ and $\F_{R_{\Gamma}} = R_{\Gamma}/(R_{\Gamma})_+$ are Koszul. Let $W \in \mathcal{T}(\Gamma)$, so there is $x \in \Gamma_+$ and $1 \leq n \leq rk_{\Gamma}(x)$ with $W \in T_n(\Gamma_x)$. By (reverse) induction on $n$, we will simultaneously prove $rann_{R_{\Gamma}}(r_W) = L(r_W)$ and $r_WR_{\Gamma}$ is a Koszul module.

If $n = rk_{\Gamma}(x)$, then $W = \{x\}$. By definition of $R_{\Gamma}$, $rann_{R_{\Gamma}}(r_x) = L_{\Gamma}(r_x)$. $r_xR_{\Gamma}$ is a direct summand of $(R_{\Gamma})_+$ by Lemma \ref{strongideal}, thus it is a Koszul module. 

We now assume $n < rk_{\Gamma}(x)$. Then there exists $U \in T_{n+1}(\Gamma_x)$, $z_0 \in \Gamma_x(n)$ and $u \in U$ such that $u \rightarrow z_0$ and $W = [z_0]^U$.  By induction, $rann_{R_{\Gamma}}(r_U) = L(r_U)$. We then have a short exact sequence
\[ 0 \rightarrow L_{\Gamma}(r_U) \rightarrow R_{\Gamma} \rightarrow r_UR_{\Gamma} \rightarrow 0. \]
Again, by induction, $r_UR_{\Gamma}$ is Koszul. Thus $L_{\Gamma}(r_U)$ is a Koszul module. The module $r_WR_{\Gamma}$ is a direct summand of $L(r_U)$ and therefore, $r_WR_{\Gamma}$ is a Koszul module. Now $rann_{R_{\Gamma}}(r_W)$ is linearly generated and $rann_{R_{\Gamma}}(r_W) = L_{\Gamma}(r_W)$. This completes the proof of the forward direction.

We now prove the reverse direction. Let $\mathcal{F}$ be the set of right ideals in $R_{\Gamma}$ of the form
\[ I = \bigoplus_{i=1}^m r_{W_i} R_{\Gamma} \]
where $W_1, \ldots, W_m \in \mathcal{T}(\Gamma)$ are pairwise disjoint. We include the zero ideal in $\mathcal{F}$ and observe $(R_{\Gamma})_+ \in \mathcal{F}$. To prove that $R_{\Gamma}$ is Koszul, we show that $\mathcal{F}$ is a Koszul filtration (c.f. \cite{Pi}).

Let 
\[ 0 \neq I = \bigoplus_{i=1}^m r_{W_i} R_{\Gamma}  \in \mathcal{F}. \]
Then set
\[ J = \bigoplus_{i=1}^{m-1} r_{W_i} R_{\Gamma} \]
and observe $J \in \mathcal{F}$ and $I = J + r_{W_m}R_{\Gamma}$. Also, the right ideal $(r_{W_m} : J) = \{p \in R_{\Gamma} : r_{W_m}p \in J \}$ (the conductor of $r_{W_m}$ into $J$) is equal to $rann_{R_{\Gamma}}(r_{W_m})$. Since $rann_{R_{\Gamma}}(r_{W_m}) = L_{\Gamma}(r_{W_m})$, $(r_{W_m} : J) \in \mathcal{F}$. We conclude $\mathcal{F}$ is a Koszul filtration, which completes our proof. \end{proof}



The following lemma is (3.5) from \cite{CPS}, again, without the uniform hypothesis.

\begin{thm}\label{cyclicgen}
Let $\Gamma$ be a finite ranked poset. Then $R_{\Gamma}$ is Koszul if and only if $R_{\Gamma_x}$ is Koszul for all $x \in \Gamma_+$. 
\end{thm}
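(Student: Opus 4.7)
The plan is to apply Theorem \ref{rightannihilator} at both ends and to compare the right annihilator conditions in $R_\Gamma$ and $R_{\Gamma_x}$ via a natural retraction. Because $\Gamma_x$ is downward closed, every cover of an element of $\Gamma_x$ again lies in $\Gamma_x$, so the defining relations of $R_{\Gamma_x}$ sit inside those of $R_\Gamma$. I will set up an injective graded algebra map $\iota_x\colon R_{\Gamma_x}\to R_\Gamma$ sending $r_y\mapsto r_y$, together with a graded algebra surjection $\pi_x\colon R_\Gamma\to R_{\Gamma_x}$ that sends $r_y\mapsto r_y$ for $y\in\Gamma_x$ and $r_y\mapsto 0$ otherwise. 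These satisfy $\pi_x\iota_x=\mathrm{id}$, so $\iota_x$ is injective.

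Next I fix $y\le x$ in $\Gamma_+$ and $W\in T(\Gamma_y)$, so that $W\subseteq\Gamma_y\subseteq\Gamma_x$. Using Lemma \ref{strongideal}, each nonzero monomial in $(R_\Gamma)_+$ is supported on a covering chain $y_1\to\cdots\to y_k$, so either every $y_i\in\Gamma_x$ or $y_1\notin\Gamma_x$; in the second case $r_W$ annihilates the monomial since no $s\in W$ can cover such a $y_1$. Decomposing $q\in R_\Gamma$ as $q=\iota_x(q')+q''$ with $q''$ the ``outside'' part, I obtain $r_Wq=r_W\iota_x(q')$ in $R_\Gamma$, and the injectivity of $\iota_x$ gives $r_Wq=0$ in $R_\Gamma$ if and only if $r_Wq'=0$ in $R_{\Gamma_x}$. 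The same computation shows that the equivalence relation $\sim^W$ of Definition \ref{r_Snotation} computed in $R_\Gamma$ restricts to the one in $R_{\Gamma_x}$ on $\Gamma_x$-elements, while every $z\in\Gamma_+\setminus\Gamma_x$ forms its own singleton class. By Proposition \ref{Ldefinition} this yields the decomposition
$$L_\Gamma(r_W)=\iota_x\bigl(L_{\Gamma_x}(r_W)\bigr)R_\Gamma \;+\; \sum_{z\in\Gamma_+\setminus\Gamma_x} r_z R_\Gamma.$$

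With these tools I prove the central comparison: for $W\in T(\Gamma_y)$ and any $x\ge y$, $rann_{R_\Gamma}(r_W)=L_\Gamma(r_W)$ holds if and only if $rann_{R_{\Gamma_x}}(r_W)=L_{\Gamma_x}(r_W)$ holds. The reverse direction takes $q\in R_\Gamma$ with $r_Wq=0$, applies the $R_{\Gamma_x}$-hypothesis to $q'$ to place $\iota_x(q')$ in the first summand of the decomposition, and puts $q''$ into the second. The forward direction takes $q'\in R_{\Gamma_x}$ with $r_Wq'=0$, lifts to $\iota_x(q')\in L_\Gamma(r_W)$, and applies $\pi_x$: the second summand dies and the first is carried into $L_{\Gamma_x}(r_W)$. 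Since Theorem \ref{rightannihilator} expresses both ``$R_\Gamma$ is Koszul'' and ``$R_{\Gamma_x}$ is Koszul for every $x\in\Gamma_+$'' in terms of the same family of conditions $rann(r_W)=L(r_W)$ indexed by pairs $(y,W)$ with $W\in T(\Gamma_y)$, the comparison finishes the theorem. The main obstacle I expect is the bookkeeping with $\sim^W$: verifying that no new equivalences or constraints are introduced when the relation is passed between $R_{\Gamma_x}$ and $R_\Gamma$, which ultimately reduces to the down-closedness of $\Gamma_x$ ensuring that all covers of $W$-elements remain inside $\Gamma_x$.
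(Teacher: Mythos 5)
Your proposal is correct and follows essentially the same route as the paper, which also reduces to the right-annihilator criterion of Theorem \ref{rightannihilator} and uses Lemma \ref{strongideal} to identify $rann_{R_{\Gamma}}(r_W)\cap R_{\Gamma_x}$ with $rann_{R_{\Gamma_x}}(r_W)$. Your version simply spells out the details (the retraction $\pi_x\iota_x=\mathrm{id}$, the behavior of $\sim^W$, and the decomposition of $L_\Gamma(r_W)$) that the paper's two-line proof leaves implicit.
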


\begin{proof}
Let $x \in \Gamma_+$ and $W \in T(\Gamma_x)$. By Lemma \ref{strongideal},
\[rann_{R_{\Gamma}}(r_W) \cap R_{\Gamma_x} = rann_{R_{\Gamma_x}}(r_W). \]
The theorem then follows from Theorem \ref{rightannihilator}. 
\end{proof}

\begin{cor}
Let $\Gamma$ be a finite ranked poset with rank less than or equal to three. Then $R_{\Gamma}$ is Koszul. 
\end{cor}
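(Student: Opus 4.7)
The plan is to invoke Theorem \ref{cyclicgen} and Theorem \ref{rightannihilator} in succession. First I would apply Theorem \ref{cyclicgen} to reduce to the cyclic case: it suffices to show $R_{\Gamma_x}$ is Koszul for every $x \in \Gamma_+$. Since $rk_\Gamma(x) \le 3$, I may assume $\Gamma = \Gamma_x$ is cyclic of rank at most three. Then, by Theorem \ref{rightannihilator}, the task reduces to verifying $rann_{R_\Gamma}(r_W) = L_\Gamma(r_W)$ for every $W \in T(\Gamma)$. I would stratify $T(\Gamma)$ by the rank level of its elements ($T_3$, $T_2$, and $T_1$) and handle each level separately.

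The rank-one level is immediate. If $W \subseteq \Gamma(1)$ then each generator $r_z$ with $z \in W$ satisfies $r_z r_w = 0$ for every $w \in \Gamma_+$ (since $z$ covers only $*$), so $r_W R_{\Gamma,\ge 1} = 0$. Hence $rann_{R_\Gamma}(r_W) = R_{\Gamma,+}$, and this equals $L_\Gamma(r_W)$ because $(rann(r_W))_1 \supseteq R_{\Gamma,1}$ generates $R_{\Gamma,+}$ as a right ideal.

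For the rank-two level I would first note that, for cyclic $\Gamma_x$ of rank three, a short computation from Definition \ref{r_Snotation} gives $T_2(\Gamma) = \{\Gamma(2)\}$. Using Lemma \ref{strongideal} to write any $q \in R_{\Gamma,2}$ as $\sum_{a \in \Gamma_+} r_a q_a$, only $a \in \Gamma(2) \cup \{x\}$ contribute, and each such $r_a$ is right-annihilated by every $r_{y'}$ with $y' \in \Gamma(2)$. This forces $r_{\Gamma(2)} q = 0$, so $R_{\Gamma,\ge 2} \subseteq rann(r_{\Gamma(2)})$. On the other hand, every $r_y$ with $y \in \Gamma(2)$ and $r_x$ belong to $(rann(r_{\Gamma(2)}))_1$, so each degree-two monomial $r_x r_y$ and $r_y r_z$ lies in $L_\Gamma(r_{\Gamma(2)})$, giving equality.

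The main case, and the main obstacle, is $W = \{x\}$ at the top level. Here $(rann(r_x))_1$ is spanned by $\{r_z : z \in \Gamma(1)\} \cup \{r_x\} \cup \{r_{\Gamma(2)}\}$, directly from the defining relations. The crux is to prove $(rann(r_x))_2 \subseteq L_\Gamma(r_x)_2$. A direct calculation using the two families of degree-three relations in $R_\Gamma$ (those descending from $r_x\, r_{\Gamma(2)} = 0$ and from $r_y \sum_{z \in S_y(1)} r_z = 0$) shows that a general $q = \sum_y a_y r_x r_y + \sum_{y \to z} b_{y,z} r_y r_z$ lies in $(rann(r_x))_2$ exactly when $b_{y,z} = \lambda_z + \mu_y$ for some scalars $\lambda_z$ and $\mu_y$. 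The key observation is that the $\mu_y$-portion $\sum_y \mu_y \sum_{z \in S_y(1)} r_y r_z$ already vanishes in $R_{\Gamma,2}$ by the quadratic relations, so $q$ reduces to $\sum_z \lambda_z \sum_{y \to z} r_y r_z + \sum_y a_y r_x r_y$, which visibly lies in $r_{\Gamma(2)} R_{\Gamma,1} + r_x R_{\Gamma,1} \subseteq L_\Gamma(r_x)_2$. Finally, $(rann(r_x))_3 = R_{\Gamma,3} = r_x R_{\Gamma,2} \subseteq L_\Gamma(r_x)$ automatically, and higher graded pieces of $R_\Gamma$ vanish. The hard part is this degree-two step in the $W = \{x\}$ case: one must recognize that the quadratic relations $r_y \sum_{z \in S_y(1)} r_z = 0$ collapse the apparent $\mu_y$-obstruction, so that the remaining contributions to $(rann(r_x))_2$ all lie manifestly in $L_\Gamma(r_x)$.
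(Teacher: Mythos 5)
Your proof is correct and follows the route the paper clearly intends (the corollary is stated there without proof, as an immediate consequence of Theorems \ref{cyclicgen} and \ref{rightannihilator}); your degree-by-degree verification of the right-annihilator condition --- in particular the computation that $r_x q=0$ forces $b_{y,z}=\lambda_z+\mu_y$ and the observation that the relations $r_y\sum_{z\in S_y(1)}r_z=0$ absorb the $\mu_y$-portion --- supplies exactly the substance the paper omits. One small imprecision: Theorem \ref{rightannihilator} requires the condition for all $W\in\mathcal{T}(\Gamma)=\bigcup_{y}T(\Gamma_y)$, not only for $W\in T(\Gamma)$, so you should also account for the singletons $\{y\}$ with $rk_\Gamma(y)=2$ and the classes in $T_1(\Gamma_y)$; but these are subsets of $\Gamma(2)$ and of $\Gamma(1)$ respectively, and your rank-two and rank-one arguments apply to them verbatim, so nothing essential is missing.
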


\begin{rmk}\label{importantrmk}
We often use the following fact, which is Theorem \ref{rightannihilator} together with Theorem \ref{cyclicgen}. Suppose $\Gamma$ is a finite ranked cyclic poset with $\Gamma = \Gamma_x$. Set $\Omega = \Gamma \setminus \{x\}$ and assume $R_{\Omega}$ is Koszul. Then $R_{\Gamma}$ is Koszul if and only if for all $W \in T(\Gamma)$, $rann_{R_{\Gamma}}(r_W) = L_{\Gamma}(r_W)$. 
\end{rmk}

We thus have a generalized right annihilator condition, but we wish to refine it. In order to do so, we first define additional notation for $\Gamma$. 

\begin{defn}\label{nSnotation}
Let $1 \leq n \leq m+1$ and $W \subset \Gamma(n)$. We set 
\[ \Gamma_W = \{a \in \Gamma : a \leq s \text{ for some } s \in W\} = \bigcup_{s \in W} [*, s]. \]
Also, for $0 \leq k \leq n-1$, we set
\[ \Gamma(W, k) =  \Gamma_W \cap \Gamma^{\geq n-k}.\]
We say $W$ is {\it linked} if $\Gamma(W, 1)$ is connected as a graph. We say $W' \subseteq W$ is {\it maximally linked} relative to $W$ if $W'$ is maximal amongst linked subsets of $W$. 
\end{defn}

The collection of all maximally linked subsets of $W$ forms a partition of $W$.

\begin{lemma}\label{ralem1}
Let $n$ and $W$ be given as in Definition \ref{nSnotation}. Then $rann_{R_{\Gamma}}(r_W) = L_{\Gamma}(r_W)$ if and only if for all maximally linked subsets $W' \subseteq W$, $rann_{R_{\Gamma}}(r_{W'}) = L_{\Gamma}(r_{W'})$.
\end{lemma}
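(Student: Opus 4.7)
The strategy is to decompose $W$ into its maximally linked pieces and exploit the direct sum structure of Lemma \ref{strongideal}. Write $W = W_1 \sqcup \cdots \sqcup W_k$ for the partition of $W$ into maximally linked subsets, and set $U_i = \bigcup_{s \in W_i} S_s(1) \subseteq \Gamma(n-1)$. The structural fact that drives the whole argument is that $U_i \cap U_j = \emptyset$ whenever $i \ne j$: if a common cover $z$ existed, then $W_i \cup W_j$ would be linked through $z$, contradicting maximality.

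With this in hand, I would first establish two identities:
\[
rann_{R_\Gamma}(r_W) = \bigcap_{i=1}^{k} rann_{R_\Gamma}(r_{W_i}), \qquad L_\Gamma(r_W) = \bigcap_{i=1}^{k} L_\Gamma(r_{W_i}).
\]
The first is immediate from Lemma \ref{strongideal}: since $r_W q = \sum_{s \in W} r_s q$ decomposes into the distinct summands $r_s R_\Gamma$, the total sum vanishes if and only if each $r_s q$ does, and grouping $s$ by $W_i$ yields the claim. The second follows from Proposition \ref{Ldefinition} by computing equivalence classes: the classes of $\sim^W$ are precisely $U_1,\ldots,U_k$ together with singletons $\{z\}$ for $z \notin U_1 \cup \cdots \cup U_k$ (the common constant $c_s$ assigned to the covers of $s$ propagates along each linked piece of $W$), whereas for $\sim^{W_i}$ the classes are $U_i$ together with singletons $\{z\}$ for $z \notin U_i$. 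Matching the resulting direct sum descriptions, and using the disjointness of the $U_i$'s, gives the intersection formula. The reverse direction of the lemma then follows by chaining: $rann_{R_\Gamma}(r_W) = \bigcap_i rann_{R_\Gamma}(r_{W_i}) = \bigcap_i L_\Gamma(r_{W_i}) = L_\Gamma(r_W)$.

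The forward direction is the technical core. Fix $i$ and $q \in rann_{R_\Gamma}(r_{W_i})$. By Lemma \ref{strongideal} write $q = c + \sum_{x \in \Gamma_+} r_x p_x$ with $c \in \F$; comparing degrees in $r_{W_i} q = 0$ forces $c = 0$. I would then split
\[
q = q^{(i)} + q^{(\mathrm{out})}, \qquad q^{(i)} := \sum_{x \in \Gamma_{W_i}} r_x p_x, \qquad q^{(\mathrm{out})} := \sum_{x \notin \Gamma_{W_i}} r_x p_x.
\]
Since $U_i \subseteq \Gamma_{W_i}$, the tail $q^{(\mathrm{out})}$ lies in $\bigoplus_{z \notin U_i} r_z R_\Gamma \subseteq L_\Gamma(r_{W_i})$. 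For the main piece $q^{(i)}$, I would verify $r_W q^{(i)} = 0$: for $s \in W_i$, every cover $x$ of $s$ lies in $[*,s] \subseteq \Gamma_{W_i}$, so $r_s q^{(\mathrm{out})} = 0$ and hence $r_s q^{(i)} = r_s q = 0$; for $t \in W_j$ with $j \ne i$, any $x \in \Gamma_{W_i}$ with $t \to x$ would have $rk_\Gamma(x) = n-1$ and thus lie in $U_i \cap U_j = \emptyset$, so $r_t q^{(i)} = 0$. Therefore $q^{(i)} \in rann_{R_\Gamma}(r_W) = L_\Gamma(r_W) \subseteq L_\Gamma(r_{W_i})$, yielding $q \in L_\Gamma(r_{W_i})$.

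The delicate point is locating the right decomposition of $q$ in the forward direction: splitting by the down-set $\Gamma_{W_i}$ works precisely because the cover sets $U_j$ are mutually disjoint, which is exactly the algebraic content of maximal linkedness. Everything else amounts to carefully tracking which summands of $(R_\Gamma)_+$ each piece inhabits.
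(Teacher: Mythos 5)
Your proof is correct and follows essentially the same route as the paper's: both arguments hinge on the disjointness of the cover-sets $U_i$ of the maximally linked pieces, obtain the reverse direction from the chain $rann_{R_\Gamma}(r_W)=\bigcap_i rann_{R_\Gamma}(r_{W_i})=\bigcap_i L_\Gamma(r_{W_i})=L_\Gamma(r_W)$, and obtain the forward direction by splitting an element of $rann_{R_\Gamma}(r_{W_i})$ into a piece that trivially lies in $L_\Gamma(r_{W_i})$ and a piece that annihilates all of $r_W$ and hence lands in $L_\Gamma(r_W)\subseteq L_\Gamma(r_{W_i})$. The remaining differences are cosmetic: you argue the forward direction directly and split by the down-set $\Gamma_{W_i}$, whereas the paper argues by contradiction, splits by $U_i$, and first reduces to $\Gamma=\Gamma_W$.
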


\begin{proof}
Due to the observation immediately following Definition \ref{ABHdef}, it is enough to prove the lemma for $\Gamma = \Gamma_W$. We therefore assume $W = \Gamma(n)$. We enumerate the maximally linked subsets of $W$: $W_1, \ldots W_p$. For all $i = 1, \ldots, p$, set $U_i = \{t \in \Gamma(n-1) : t < s \text{ for some } s \in W_i\}$. 

For the forward direction, suppose towards a contradiction that for some $k \in \{1, \ldots, p\}$, there exists $A \in rann_{R_{\Gamma}}(r_{W_k}) \setminus L_{\Gamma}(r_{W_k})$. Then $A = A_1 + A_2$ with 
\[ A_1 \in \bigoplus_{z \in U_k} r_z R_{\Gamma} \hspace{.2in} \text{and} \hspace{.2in} A_2 \in \bigoplus_{z \in \Gamma_+ \setminus U_k} r_z R_{\Gamma} = B_{\Gamma}(r_{W_k}) \oplus H_{\Gamma}(n-1). \]
Clearly $r_{W_k}A_2 = 0$, thus $r_{W_k}A_1 = 0$. Also, $A_1$ is nonzero and $A_1 \not \in r_{U_k}R_{\Gamma} = A_{\Gamma}(r_{W_k})$; if not, then $A \in L_{\Gamma}(r_{W_k})$. We compute
\[ r_WA_1 = (r_{W_k} + r_{\Gamma(n) \setminus W_k}) A_1 = r_{W_k}A_1 = 0. \]
Therefore, $A_1 \in rann_{R_{\Gamma}}(r_W)$. Also, we see $A_1 \not \in A_{\Gamma}(r_W) = \bigoplus_{i=1}^p r_{U_i}R_{\Gamma}$ since $A_1 \not \in r_{U_k}R_{\Gamma}$. Thus $A_1 \not \in L_{\Gamma}(r_W)$, which is a contradiction.
 
For the converse, we assume $rann_{R_{\Gamma}}(r_{W_i}) = L_{\Gamma}(r_{W_i})$ for all $i = 1, \ldots, p$. Carefully applying Definition \ref{ABHdef}, we compute
\begin{align*}
rann_{R_{\Gamma}}(r_W) & = \bigcap_{i=1}^p rann_{R_{\Gamma}}(r_{W_i}) \\
& = \bigcap_{i=1}^p L_{\Gamma}(r_{W_i}) \\
& = \bigoplus_{i=1}^p A_{\Gamma}(r_{W_i}) \oplus H_{\Gamma}(n-1) \\
& = \bigoplus_{[z_0]^W} r_{[z_0]^W} R_{\Gamma} \\
& = L_{\Gamma}(r_W),
\end{align*}
which completes our proof.
\end{proof}

\begin{lemma}\label{ralem2}
Let $n$ and $W$ be given as in Definition \ref{nSnotation}. Assume $W$ is linked. Then $rann_{R_{\Gamma}}(r_W) = L_{\Gamma}(r_W)$ if and only if for all $0 \leq k \leq n-3$, $H^{n-2}(R_{\Gamma_W}( \bullet, k), d_{\Gamma_W}) = 0$. 
\end{lemma}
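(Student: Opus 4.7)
The plan is to translate both sides of the stated equivalence into matching concrete statements about cocycles and coboundaries of the cochain complex at position $n-2$, and then verify they agree. The first preparation is that the differential $d_{\Gamma_W}$ acts concretely on the relevant pieces: if $\alpha \in R_{\Gamma_W}(n-2,k)$, every monomial of $\alpha$ begins with a generator in $\Gamma_W(n-1) = U$, so only those $r_y$ in $d_{\Gamma_W}$ with $y \in \Gamma_W(n) = W$ contribute a nonzero product, giving $d_{\Gamma_W}\alpha = r_W\alpha$. The analogous computation one rank lower yields $d_{\Gamma_W}\beta = r_U\beta$ for $\beta \in R_{\Gamma_W}(n-3,k)$. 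Thus the cocycles at position $n-2$ are $\ker(r_W\cdot) \cap R_{\Gamma_W}(n-2,k)$ and the coboundaries are $r_U \cdot R_{\Gamma_W}(n-3,k)$.

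Next I describe $L_\Gamma(r_W)$ explicitly using the linked hypothesis. Expanding $r_W q$ in the basis of $R_{\Gamma,2}$ and using the quadratic relations of $R_\Gamma$ together with Lemma \ref{strongideal}, one sees that $q = \sum_c q_c r_c$ lies in $rann_{R_\Gamma}(r_W)_1$ exactly when there exists a function $\mu : W \to \F$ with $q_u = \mu_s$ whenever $s \in W$ and $s \to u$, while $q_c$ is unconstrained for $c \notin U$; the forced identifications $\mu_s = \mu_{s'}$ whenever $s, s' \in W$ share a common element of $U$ below them compel $\mu$ to be constant on the connected components of $\Gamma(W,1)$. Linkedness reduces this graph to one component, so $U$ is a single $\sim^W$-equivalence class, and combined with the singleton classes $\{z\}$ for $z \notin U$, Proposition \ref{Ldefinition} gives $L_\Gamma(r_W) = r_U R_\Gamma \oplus B_\Gamma(r_W) \oplus H_\Gamma(n-1)$. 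Because $\Gamma_W$ is downward closed and $r_u r_c = 0$ unless $c < u$, one checks $r_U R_\Gamma \cap R_{\Gamma_W}(n-2,k) = r_U \cdot R_{\Gamma_W}(n-3,k)$, so the $r_U R_\Gamma$ summand of $L_\Gamma(r_W)$ in this grading coincides with the coboundary space.

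Putting these ingredients together, the forward direction is immediate: any cocycle $\alpha \in R_{\Gamma_W}(n-2,k)$ satisfies $r_W\alpha = 0$, hence by hypothesis lies in $L_\Gamma(r_W)$, and since every monomial of $\alpha$ has its first generator in $U$ it must occupy the $r_U R_\Gamma$ summand, which was just identified with the coboundaries. For the reverse direction, I take a homogeneous $\alpha \in rann_{R_\Gamma}(r_W)_j$ and split $\alpha = \alpha_{\in U} + \alpha_{\notin U}$ via Lemma \ref{strongideal} according to whether the first generator lies in $U$; the summand $\alpha_{\notin U}$ sits in $B_\Gamma(r_W) \oplus H_\Gamma(n-1) \subseteq L_\Gamma(r_W)$ for free, and $r_W\alpha_{\in U} = 0$. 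For $j = 1$ the piece $\alpha_{\in U}$ is already in $L_\Gamma(r_W)_1$ by definition; for $j \geq n$ it vanishes on rank grounds; and for $2 \leq j \leq n-1$ it is a cocycle in $R_{\Gamma_W}(n-2, n-j-1)$ with $0 \leq n-j-1 \leq n-3$, so the assumed vanishing produces $\alpha_{\in U} = r_U\beta \in r_U R_\Gamma \subseteq L_\Gamma(r_W)$. The main obstacle is the explicit characterization of the $\sim^W$-equivalence classes under the linked hypothesis, which requires direct manipulation of the quadratic relations inside $R_{\Gamma,2}$; once that bookkeeping is in place, the cohomological translation is essentially formal.
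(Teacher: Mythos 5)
Your proposal is correct and follows essentially the same route as the paper's (much terser) proof: reduce to $\Gamma = \Gamma_W$, use linkedness to identify $L_\Gamma(r_W)$ as $r_U R_\Gamma \oplus B_\Gamma(r_W)\oplus H_\Gamma(n-1)$ with $U$ a single $\sim^W$-class, and match cocycles and coboundaries of $R_{\Gamma_W}(\bullet,k)$ at position $n-2$ with the graded pieces of $rann_{R_\Gamma}(r_W)$ and $L_\Gamma(r_W)$ degree by degree. The paper states these identifications as two unproved ``if and only if'' assertions; your write-up supplies exactly the missing bookkeeping.
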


\begin{proof}
Again, it is enough to prove the claim for $\Gamma = \Gamma_W$. We set $W = \Gamma(n)$. Then $rann_{R_{\Gamma}}(r_W) = L_{\Gamma}(r_W)$ if and only if 
\[ rann_{R_{\Gamma}}(r_W) = r_{\Gamma(n-1)}R_{\Gamma} \oplus H_{\Gamma}(n-1). \]
This equality holds if and only if 
\begin{diagram}
R_{\Gamma}(n-3, k) & \rTo^{r_{\Gamma(n-1)} \cdot} & R_{\Gamma}(n-2, k) & \rTo^{r_W \cdot} &  R_{\Gamma}(n-1, k) 
\end{diagram}
is exact for all $0 \leq k \leq n-3$. 
\end{proof}

\begin{defn}
Assume $\Gamma = \Gamma_x$. We define
\[ M_n(\Gamma) = \bigcup_{W \in T_n(\Gamma)} \{W' : W' \text{ is maximally linked relative to } W\}  \]
and
\[ M(\Gamma) = \bigcup_{n=1}^{m+1} M_n(\Gamma). \]
\end{defn}

\begin{rmk}\label{Muniform}
Using Remark \ref{treerem}, we see that if $\Gamma$ is uniform then $M_n(\Gamma) = \{ \Gamma(n) \}$ for all $1 \leq n \leq m+1$.
\end{rmk}

We combine the definition of $M(\Gamma)$ with Lemmas \ref{ralem1} and \ref{ralem2} and Remark \ref{importantrmk} for the following theorem about the Koszulity of $R_{\Gamma}$. 

\begin{thm}\label{rightannihilator2}
Let $\Gamma$ be a finite ranked poset and assume $\Gamma = \Gamma_x$. Set $\Omega = \Gamma \setminus \{x\}$ and assume $R_{\Omega}$ is Koszul. The following are equivalent:
\begin{enumerate}
\item $R_{\Gamma}$ is Koszul;
\item for all $W \in M(\Gamma)$, $rann_{R_{\Gamma}}(r_W) = L_{\Gamma}(r_W)$;
\item for all $1 < n \leq m$, $W \in M_n(\Gamma)$ and $0 \leq k \leq n-3$, $H^{n-2}(R_{\Gamma_W}( \bullet, k), d_{\Gamma_W}) = 0$.
\end{enumerate}
\end{thm}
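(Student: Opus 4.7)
The plan is to chain together the equivalences established in Remark \ref{importantrmk} and Lemmas \ref{ralem1} and \ref{ralem2}, tracking the quantifier over $W$ through the definition of $M(\Gamma)$.

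First I would prove (1) $\Leftrightarrow$ (2). Remark \ref{importantrmk} asserts, under the standing hypothesis that $R_\Omega$ is Koszul, that $R_\Gamma$ is Koszul if and only if $rann_{R_\Gamma}(r_W) = L_\Gamma(r_W)$ holds for every $W \in T(\Gamma)$. For each such $W$, Lemma \ref{ralem1} converts this single equality into the statement that the corresponding equality holds for every maximally linked subset $W' \subseteq W$. Since $M(\Gamma) = \bigcup_n M_n(\Gamma)$ is by definition the family of all such maximally linked subsets of members of $T(\Gamma)$, reindexing the quantifier produces exactly (2).

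Next I would prove (2) $\Leftrightarrow$ (3). By construction every $W \in M(\Gamma)$ is linked, so Lemma \ref{ralem2} applies: for $W \in M_n(\Gamma)$ the equality $rann_{R_\Gamma}(r_W) = L_\Gamma(r_W)$ is equivalent to the vanishing of $H^{n-2}(R_{\Gamma_W}(\bullet, k), d_{\Gamma_W})$ for all $0 \leq k \leq n-3$. Conjoining over $n$ and $W$ yields (3), provided the restriction to $1 < n \leq m$ loses no content. For $n \in \{1, 2\}$ the index set $\{k : 0 \leq k \leq n-3\}$ is empty, so Lemma \ref{ralem2} forces $rann_{R_\Gamma}(r_W) = L_\Gamma(r_W)$ automatically; for $n = m+1$ the only element of $M_n(\Gamma)$ is $\{x\}$, and the equality $rann_{R_\Gamma}(r_x) = L_\Gamma(r_x)$ is immediate from the defining relations of $R_\Gamma$ together with Lemma \ref{strongideal}. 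Hence these boundary ranks contribute nothing to (2), and the quantifier in (3) can be trimmed exactly as stated.

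I do not anticipate a substantive obstacle: the theorem is essentially a synthesis of results already proved, and the only real care required is the bookkeeping above, verifying that the three boundary ranks $n = 1$, $n = 2$, and $n = m+1$ add no nontrivial content and can therefore be safely omitted from statement (3).
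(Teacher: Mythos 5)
Your proposal is correct and follows exactly the route the paper intends: the paper gives no separate proof of Theorem \ref{rightannihilator2}, stating only that it follows by combining Remark \ref{importantrmk} with Lemmas \ref{ralem1} and \ref{ralem2} and the definition of $M(\Gamma)$, which is precisely the chain of equivalences you carry out. Your explicit bookkeeping of the boundary ranks $n=1,2$ (vacuous $k$-range) and $n=m+1$ (where $rann_{R_\Gamma}(r_x)=L_\Gamma(r_x)$ is immediate from the defining relations) is a correct and welcome elaboration of what the paper leaves implicit.
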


We end this section with an example that motivates the definition of weakly Cohen-Macaulay and our main theorem.

\begin{example}\label{ex4.1}
Let $\Theta$ be the ranked poset shown in Figure \ref{theta}.

\begin{figure}[h!]
\centering
\begin{minipage}{.5\textwidth}
  \centering
 \includegraphics[scale=0.5]{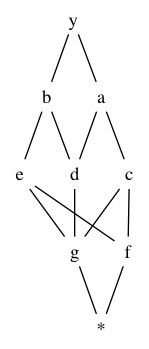}
  \caption{The poset $\Theta$.}
  \label{theta}
\end{minipage}%
\begin{minipage}{.5\textwidth}
  \centering
 \includegraphics[scale=0.5]{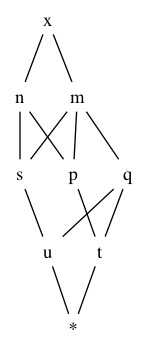}
  \caption{The poset $\Theta^*$.}
  \label{theta*}
\end{minipage}
\end{figure}

This poset $\Theta$ was first introduced by Cassidy and was studied extensively in \cite{Nacin}. By inspection, we see $\Theta$ is uniform. Also, by inspection, we know $\Theta$ is not Cohen-Macaulay; $ \| \Delta(\Theta_{y, 4}) \|$ is homotopic to $S^1$. By Theorem \ref{KSthm}, $R_{\Theta}$ is not Koszul.

Let $\Theta^*$ be the dual poset of $\Theta$, as shown in Figure \ref{theta*}. Since $\Theta^*_{n, 3}$ is disconnected as a graph, $\Theta^*$ is not uniform and  $\Theta^*$ is not Cohen-Macaulay. Also, $\| \Delta({\Theta^*}_{x, 4}) \| = \| \Delta(\Theta_{y, 4}) \|$ is homotopic to $S^1$. 

We claim $R_{\Theta^*}$ is Koszul. By Theorem \ref{rightannihilator2}, it suffices to show that $rann_{R_{\Theta^*}}(r_m + r_n)$ is linearly generated. We see $(r_s + r_p + r_q)r_u = r_qr_u = -r_qr_t$. Then, by inspection,
\[ rann_{R_{\Theta^*}}(r_m + r_n) = (r_s + r_p + r_q)R_{\Theta^*} \oplus H_{\Theta^*}(2). \] 
\end{example}


\section{A Spectral Sequence Associated to $\Delta(\Gamma \setminus \{*\})$ }\label{spectralsequence}

Let $Y = \| \Delta(\Gamma \setminus \{*\}) \|$. We start by defining an unusual filtration on $C^{\bullet}(Y)$. Let $F_pC^{\bullet}(Y)$ be an increasing filtration on $C^{\bullet}(Y)$ given by
\[ F_pC^n(Y) = \{ (\alpha_0, \ldots, \alpha_n) \in C^n(Y) : rk_{\Gamma}(\alpha_n) \geq (m+1) - p\}. \]
We note: $F_pC^{n}(Y)$ consists of all $n$ co-chains of $Y$ that emanate from the top $p+1$ layers of $\Gamma$. Also, observe
\[ 0 = F_{-1}C^{\bullet}(Y) \subset F_0C^{\bullet}(Y) \subset \cdots \subset F_{m-1}C^{\bullet}(Y) \subset F_{m}C^{\bullet}(Y) = C^{\bullet}(Y). \]

Let $E^n_{p,q}$ denote the associated cohomology spectral sequence with
\[ E^0_{p,q} = F_pC^{p+q}(Y)/F_{p-1}C^{p+q}(Y) \hspace{.3in} \text{and} \hspace{.3in} E^1_{p,q} = H^{p+q}(E^0_{p*}). \]
It is bounded since $E^0_{p,q} = 0$ if $p < 0$, $p+q < 0$, or $q > -2p + m$. The differentials are maps $d^r_E : E^r_{p,q} \rightarrow E^r_{p-r, q+r+1}$ induced by the differential $d_{Y}$ on $C^{\bullet}(Y)$. 

\begin{prop}
Let $\Gamma$ be a ranked poset. Let $E^r_{p, q}$ be the spectral sequence of the filtration $F_{\bullet}$ on $C^{\bullet}(Y)$. For all $p$ and $q$, $E^{m+1}_{p,q} = E^{\infty}_{p,q}$. Also $E^1_{p,q} \implies H^{p+q}(Y)$. 
\end{prop}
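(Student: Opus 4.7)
The plan is to derive both claims from the standard convergence theorem for a bounded filtration of a cochain complex, together with a simple bidegree argument to locate the page at which the sequence collapses. First I would check that $F_\bullet$ really is a filtration of cochain complexes, i.e.\ that $d_Y(F_pC^n(Y)) \subseteq F_pC^{n+1}(Y)$. Given $\alpha \in F_pC^n(Y)$ and an $(n+1)$-simplex $\sigma = (\beta_0 < \cdots < \beta_{n+1})$ of $\Delta(\Gamma\setminus\{*\})$ with $rk_\Gamma(\beta_{n+1}) < (m+1) - p$, every codimension-one face of $\sigma$ has top vertex either $\beta_n$ or $\beta_{n+1}$, both of rank at most $rk_\Gamma(\beta_{n+1}) < (m+1) - p$. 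Hence $\alpha$ vanishes on each face, so $(d_Y\alpha)(\sigma) = 0$, and therefore $d_Y\alpha \in F_pC^{n+1}(Y)$.

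Next I would observe that the filtration is bounded in the sense appropriate to an increasing cohomological filtration: $F_{-1}C^\bullet(Y) = 0$, and $F_mC^\bullet(Y) = C^\bullet(Y)$ because every vertex of $Y$ lies in $\Gamma \setminus \{*\}$ and so has rank at least $1 = (m+1) - m$. The classical convergence theorem for such a filtration (e.g.\ Weibel's \emph{Introduction to Homological Algebra}, Theorem 5.5.1) then yields a strongly convergent spectral sequence $E^1_{p,q} \Rightarrow H^{p+q}(Y)$, which is the second assertion.

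For the collapse statement, note that $E^0_{p,q}$ vanishes unless $0 \leq p \leq m$, and the same then holds for every later page. Since $d^r$ has bidegree $(-r, r+1)$, for $r \geq m+1$ the map out of $E^r_{p,q}$ lands in the region $p - r < 0$ and the map into $E^r_{p,q}$ originates from the region $p + r > m$; both therefore vanish, giving $E^{m+1}_{p,q} = E^{m+2}_{p,q} = \cdots = E^\infty_{p,q}$. The only step with any genuine content is the verification that $d_Y$ respects $F_\bullet$, which reduces to the observation that deleting one element from a chain cannot raise the rank of its top element; all remaining work is routine bookkeeping for a bounded-filtration spectral sequence, so I anticipate no real obstacle beyond keeping the bidegrees of $d^r$ consistent with the increasing-filtration conventions used throughout the paper.
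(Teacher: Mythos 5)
Your proposal is correct and follows essentially the same route as the paper: boundedness of the filtration plus the classical convergence theorem (the paper's citation [CW], Theorem 5.5.1, is the same Weibel reference you invoke) for convergence, and the bidegree argument forcing $d^r = 0$ for $r \geq m+1$ for the collapse. You additionally spell out the routine check that $d_Y$ respects $F_\bullet$, which the paper leaves implicit.
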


\begin{proof}
The equality holds because $d^{m+1}_E$ is the zero map on $E^{m+1}_{p,q}$. The convergence statement follows from Theorem 5.5.1 of \cite{CW}. 
\end{proof}

The $E^1$ page of our spectral sequence can be written in terms of reduced cohomology of open intervals of $\Gamma$. 

\begin{prop}
Let $\Gamma$ be a finite ranked poset. Let $E^r_{p, q}$ be the spectral sequence of the filtration $F_{\bullet}$ on $C^{\bullet}(Y)$. For all $p$ and $q$,
\[ E^1_{p,q} = \bigoplus_{rk_{\Gamma}(x) = (m+1)-p} \tilde{H}^{p+q-1}( \Delta((*, x))). \]
\end{prop}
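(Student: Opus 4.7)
The plan is to compute $E^0_{p,q}$ explicitly as a direct sum of reduced cochain complexes indexed by the rank-$((m{+}1){-}p)$ elements of $\Gamma$, observe that $d^0_E$ respects this decomposition as the simplicial coboundary on each summand, and then pass to cohomology.

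First I would unpack the quotient $E^0_{p,q} = F_p C^{p+q}(Y) / F_{p-1} C^{p+q}(Y)$. Viewing $C^n(Y)$ as free on the $n$-simplices $\alpha_0 < \cdots < \alpha_n$ of $\Delta(\Gamma_+)$, the quotient $E^0_{p,q}$ has a basis consisting of those chains with $rk_\Gamma(\alpha_{p+q}) = (m{+}1) - p$ exactly. Grouping by the top element $x := \alpha_{p+q}$ and forgetting it leaves a chain of $p+q$ elements in $(*,x)$, i.e.\ a $(p+q-1)$-simplex of $\Delta((*,x))$. Using reduced cochains to handle the augmentation case $p+q=0$, this yields a canonical identification
\[
E^0_{p,q} \;\cong\; \bigoplus_{\substack{x \in \Gamma \\ rk_\Gamma(x)=(m+1)-p}} \tilde C^{\,p+q-1}\bigl(\Delta((*,x))\bigr).
\]

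Second I would verify that $d^0_E$ is the reduced simplicial coboundary on each summand. The coboundary on $C^\bullet(Y)$ sends $(\alpha_0 < \cdots < \alpha_{p+q})$ to the signed sum, over all vertex insertions $\beta \in \Gamma_+$, of the resulting $(p+q+1)$-simplex. Any insertion placing $\beta$ above $\alpha_{p+q}$ produces a chain whose top has strictly higher rank, so that term lies in $F_{p-1} C^{p+q+1}(Y)$ and dies in $E^0_{p,q+1}$. What remains are insertions with $\beta < x$, which, after truncating the top vertex, recover precisely the reduced coboundary on $\Delta((*,x))$ applied to $(\alpha_0 < \cdots < \alpha_{p+q-1})$. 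Hence the decomposition above is one of cochain complexes, and taking cohomology gives the desired formula.

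The main obstacle is purely bookkeeping: one must match sign conventions carefully when truncating the top vertex, and handle the low-degree edge cases $p+q \in \{0,1\}$ where the augmentation $\tilde C^{-1} = \F$ enters, so that the identification extends uniformly all the way down. Once these conventions are pinned down, the essential content of the proof is the design of the filtration itself: on the associated graded, the coboundary is forced to insert vertices only below the current top, which is exactly the coboundary of the open interval $(*,x)$.
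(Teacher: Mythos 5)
Your proposal is correct and follows essentially the same route as the paper: identify $E^0_{p,q}$ with $\bigoplus_x \tilde C^{p+q-1}(\Delta((*,x)))$ by truncating the top vertex of each chain, note that insertions above the top vertex land in $F_{p-1}$ and hence vanish in the associated graded, and check that the remaining insertions give the reduced coboundary of each $\Delta((*,x))$. The paper carries out exactly this computation (including the $p+q=0$ augmentation case via $(\alpha_0)\mapsto (1)_{\alpha_0}$), so no further comparison is needed.
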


\begin{proof}
We fix $p$ and identify two chain complexes: $(E^0_{p, *}, d^0_E)$ and 
\[ \left( \bigoplus_{rk_{\Gamma}(x) = (m+1)-p} \tilde{C}^{\bullet}( \Delta((*, x)) ), \bigoplus_{rk_{\Gamma}(x )= (m+1)-p}  d_{ \Delta((*, x)) } \right).  \] 
For convenience of notation, elements of $\tilde{C}^{\bullet}( \Delta((*, x)))$ will be denoted by sums of chains of the form $(\alpha_0, \ldots, \alpha_n)_x$. Define a map
\[f : E^0_{p, q} = F_pC^{p+q}(Y)/F_{p-1}C^{p+q}(Y) \rightarrow \bigoplus_{rk_{\Gamma}(x) = (m+1)-p} \tilde{C}^{p+q-1}( \Delta((*, x)) )\]
via $(\alpha_0) \mapsto (1)_{\alpha_0}$ and if $p + q > 0$
\[ (\alpha_0, \ldots, \alpha_{p+q}) \mapsto (\alpha_0, \ldots, \alpha_{p+q-1})_{\alpha_{p+q}}, \]
and extend linearly. It is easy to see that $f$ is bijective. We claim $f$ is a co-chain map. Let $(\alpha_0, \ldots, \alpha_{p+q}) \in E^0_{p, q}$ and note $rk_{\Gamma}(\alpha_{p+q}) = m+1 - p$. We compute
\begin{align*}
\bigoplus_{rk_{\Gamma}(x )= (m+1)-p}  d_{\Delta((*, x)) } \left( f(\alpha_0, \ldots, \alpha_{p+q}) \right) & = \bigoplus_{rk_{\Gamma}(x )= (m+1)-p}  d_{\Delta((*, x)) } \left( (\alpha_0, \ldots, \alpha_{p+q-1})_{\alpha_{p+q}} \right) \\
 & = d_{ \Delta((*, \alpha_{p+q})) } \left(\alpha_0, \ldots, \alpha_{p+q-1}  \right) \\
\end{align*}
and on the other hand
\begin{align*}
f(d^0_E(\alpha_0, \ldots, \alpha_{p+q})) & = f \left( \sum_{x < \alpha_0} ( x, \alpha_0, \ldots, \alpha_{p+q}) \right) \\
& + f \left (\sum_{i=0}^{p+q-1} (-1)^{i+1} \sum_{\alpha_i < x < \alpha_{i+1}} \left( \alpha_0, \ldots, \alpha_i,  x, \alpha_{i+1}, \ldots, \alpha_{p+q} \right) \right)  \\
 & = \sum_{x < \alpha_0} (x, \alpha_0, \ldots, \alpha_{p+q-1})_{\alpha_{p+q}} \\ 
 & + \sum_{i=0}^{p+q-1} (-1)^{i+1} \sum_{\alpha_i < x < \alpha_{i+1}} \left(\alpha_0, \ldots, \alpha_i, x, \alpha_{i+1}, \ldots, \alpha_{p+q-1} \right)_{\alpha_{p+q}} \\
& = d_{ \Delta((*, \alpha_{p+q})) } \left(\alpha_0, \ldots, \alpha_{p+q-1}  \right). 
\end{align*} It follows that $f$ is a co-chain isomorphism.
\end{proof}

We now make an observation related to Theorem \ref{KSthm}. 

\begin{cor}\label{interesting}
Let $\Gamma$ be a finite ranked cyclic poset and let $E^r_{p, q}$ be the spectral sequence of the filtration $F_{\bullet}$ on $C^{\bullet}(Y)$. Assume $\Gamma$ is Cohen-Macaulay. Then $E^1_{p, q} = 0$ unless $q = -2p + m$. Moreover, $E^2_{p, q} = E^{\infty}_{p, q}$. 
\end{cor}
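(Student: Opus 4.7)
The plan is to combine the formula for the $E^1$ page just established with the Cohen--Macaulay hypothesis, and then use an elementary support argument to force the higher differentials to vanish.

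\medskip

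\noindent First, I would apply the preceding proposition to write
\[ E^1_{p,q} = \bigoplus_{rk_\Gamma(x) = (m+1)-p} \tilde{H}^{p+q-1}(\Delta((*,x))). \]
For any $x$ with $rk_\Gamma(x) = (m+1)-p$, the open interval $(*,x)$ has length $rk_\Gamma(x)$, so $\Delta((*,x))$ has dimension $rk_\Gamma(x) - 2 = m-p-1$. The Cohen--Macaulay hypothesis then tells us that $\tilde{H}^{p+q-1}(\Delta((*,x)))$ vanishes unless $p+q-1 = m-p-1$, i.e.\ unless $q = -2p+m$. This immediately gives the first assertion.

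\medskip

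\noindent For the second assertion, I would use the fact that the nonzero $E^1$-terms (and therefore the nonzero terms on any subsequent page, since $E^r_{p,q}$ is a subquotient of $E^1_{p,q}$) are all concentrated on the single diagonal line $L = \{(p,q) : q = -2p + m\}$. The differential $d^r_E$ shifts bidegrees by $(-r, r+1)$: if $(p,q) \in L$, then the target $(p-r, q+r+1)$ lies on $L$ precisely when $q+r+1 = -2(p-r) + m$, that is, when $r = 1$. Thus for each $r \geq 2$, the shift $(-r, r+1)$ sends $L$ off itself, and likewise the reverse shift $(r, -r-1)$ sends $L$ off itself. Consequently every $d^r$ for $r \geq 2$ has either source or target zero, so $d^r = 0$ and $E^{r+1}_{p,q} = E^r_{p,q}$. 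In particular $E^2_{p,q} = E^\infty_{p,q}$, finishing the proof.

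\medskip

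\noindent Neither step is genuinely difficult; the only thing one has to be careful about is correctly identifying the dimension of $\Delta((*,x))$ (it is $rk_\Gamma(x) - 2$, not $rk_\Gamma(x)-1$, because the open interval excludes both endpoints) and correctly matching this to the cohomological degree $p+q-1$ appearing in the $E^1$ formula. The "support on a single line" observation then mechanically kills the higher differentials.
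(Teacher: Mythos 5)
Your proof is correct and follows exactly the route the paper intends (the corollary is stated without proof, as an immediate consequence of the preceding proposition): the $E^1$ formula plus the Cohen--Macaulay vanishing confines everything to the line $q=-2p+m$, and the bidegree shift $(-r,r+1)$ of $d^r_E$ preserves that line only for $r=1$, killing all higher differentials. Your identification of $\dim\Delta((*,x))=rk_\Gamma(x)-2$ is the one point that needs care, and you handled it correctly.
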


The first page of our spectral sequence is important for our main theorem, so we will introduce some convenient notation. 

\begin{defn}
For $0 \leq n \leq m$, define
\[ S^n(\Gamma_+) = \bigoplus_{rk_{\Gamma}(x) = n+1} \tilde{H}^{n-1}( \Delta((*, x)) ). \]
For $n > 0$, we denote elements in $\tilde{H}^{n-1}( \Delta((*, x)) )$ with linear combinations of equivalence classes of the form $[x_1 \leftarrow \cdots \leftarrow x_n]_x$. This notation requires: $x_n \leftarrow x$. Elements in $\tilde{H}^{-1}(\Delta((*, x)) )$ will be denoted by scalar multiples of $[1]_x$. 

Define $d_{S^n(\Gamma_+)} : S^n(\Gamma_+) \rightarrow S^{n+1}(\Gamma_+)$ by extending linearly from the formula
\[ [1]_x \mapsto \bigoplus_{x \leftarrow y} [x]_y \]
and for $n > 0$
\[ [x_1 \leftarrow \cdots \leftarrow x_n]_x \mapsto \bigoplus_{x \leftarrow y} (-1)^n[x_1 \leftarrow \cdots \leftarrow x_n \leftarrow x]_y. \]
\end{defn}

It is worth noting that we will later study the spaces $S^n((\Gamma^{>k})_+)$ and $S^n(\Gamma(W, k))$, which is consistent with our above notation.  

\begin{prop}
The map $d_{S^n(\Gamma_+)} : S^n(\Gamma_+) \rightarrow S^{n+1}(\Gamma_+)$ is well-defined. 
\end{prop}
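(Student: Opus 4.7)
The plan is to lift $d_{S^n(\Gamma_+)}$ to an explicit cochain-level map and show that lift commutes with the simplicial coboundary. For each covering relation $y \to x$ with $rk_\Gamma(x) = n+1$, define
$$T_y : \tilde{C}^{n-1}(\Delta((*, x))) \to \tilde{C}^n(\Delta((*, y)))$$
on basis elements by $(x_1, \ldots, x_n) \mapsto (x_1, \ldots, x_n, x)$; since $x_n < x < y$, this is a genuine $n$-simplex in $\Delta((*, y))$. For $n = 0$, this specializes to the map $\F = \tilde{C}^{-1}(\Delta((*, x))) \to \tilde{C}^0(\Delta((*, y)))$ sending the augmentation generator to the vertex $(x)$, which is only meaningful when $rk_\Gamma(x) = 1$.

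The main step is to verify that $T_y$ is a cochain map of degree $+1$, i.e. $d_y \circ T_y = T_y \circ d_x$, where $d_x, d_y$ denote the simplicial coboundaries on $\Delta((*,x))$ and $\Delta((*,y))$. Using the standard insertion formula in the sign convention of Proposition 5.2 (extended to include the top-insertion term with sign $(-1)^n$), every term of $d_y(x_1, \ldots, x_n, x)$ obtained by inserting a vertex $z$ before $x_1$, between some $x_i$ and $x_{i+1}$, or between $x_n$ and $x$ matches, with the same sign, the $T_y$-image of the corresponding term of $d_x(x_1, \ldots, x_n)$. The only additional term that could appear in $d_y \circ T_y$ is the insertion of a vertex $z$ after $x$, which would require $x < z < y$; this set is empty because $y$ covers $x$. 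Hence the two expressions agree term by term.

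It follows that $T_y$ descends to a well-defined map $\tilde{H}^{n-1}(\Delta((*, x))) \to \tilde{H}^n(\Delta((*, y)))$ on cohomology. The formula defining $d_{S^n(\Gamma_+)}$ is precisely $(-1)^n$ times the direct sum of these induced maps over all $x \in \Gamma(n+1)$ and all $y \to x$, so it yields a well-defined map $S^n(\Gamma_+) \to S^{n+1}(\Gamma_+)$. The overall sign $(-1)^n$ is a convention and does not affect well-definedness.

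There is no substantial obstacle; the proof reduces to tracking signs in the simplicial coboundary formula and handling the degenerate $n = 0$ case. As a more conceptual alternative, the cochain-level isomorphism $f$ of the previous proposition identifies $d_{S^n(\Gamma_+)}$ (up to sign) with the spectral sequence differential $d^1_E$ on the $E^1$-page of the filtration $F_\bullet C^\bullet(Y)$, which is automatically well-defined as a spectral sequence differential, giving an a priori reason that the formula descends to cohomology.
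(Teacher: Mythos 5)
Your proof is correct and takes essentially the same approach as the paper: the paper's proof just records that it suffices to check the cochain-level formula carries the image of $d^{n-2}_{\Delta((*,x))}$ into the image of $d^{n-1}_{\Delta((*,y))}$ for each $y \to x$, and your verification that the append-$x$ map $T_y$ commutes with the simplicial coboundaries (with the key observation that no vertex can be inserted between $x$ and $y$ since $y$ covers $x$) is exactly that check, made explicit. Your closing spectral-sequence remark also matches the paper's own identification of $d_{S^n(\Gamma_+)}$ with the differential $d^1_E$ on the $E^1$-page.
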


\begin{proof}
If $n = 0$, the map is clearly well-defined. Assume $n > 0$. It is sufficient to observe the inclusion
\begin{align*}
d_{S^n(\Gamma_+)} \left( im(d_{\Delta((*, x))}^{n-2}: C^{n-2}(\Delta((*, x))) \rightarrow C^{n-1}(\Delta((*, x)))) \right) \\
\subset im(d_{ \Delta((*, z))}^{n-1}: C^{n-1}(\Delta((*, z))) \rightarrow C^{n}(\Delta((*,z)))). \
\end{align*}
for all $x \leftarrow z$.
\end{proof}






\begin{prop}
$(S^{\bullet}(\Gamma_+), d_{S^{\bullet}(\Gamma_+)})$ is a co-chain complex.
\end{prop}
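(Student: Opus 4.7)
The goal is to show $d_{S^{n+1}(\Gamma_+)} \circ d_{S^n(\Gamma_+)} = 0$ for all $n \geq 0$. The plan is to evaluate the composition on a generator $[x_1 \leftarrow \cdots \leftarrow x_n]_x$ (interpreted as $[1]_x$ when $n = 0$), decompose the image by the common ``endpoint'' $z$ of rank $n+3$ with $x < z$, and then exhibit each $z$-component as a coboundary in $\tilde{C}^{\bullet}(\Delta((*, z)))$, so that it represents zero in $\tilde{H}^{n+1}(\Delta((*, z)))$.

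Unfolding the defining formulas, the $z$-component of $d_{S^{n+1}(\Gamma_+)} \, d_{S^n(\Gamma_+)} \bigl( [x_1 \leftarrow \cdots \leftarrow x_n]_x \bigr)$ is a scalar multiple of the cochain
\[ \Sigma_z \;:=\; \sum_{x \leftarrow y \leftarrow z} [x_1 \leftarrow \cdots \leftarrow x_n \leftarrow x \leftarrow y] \;\in\; \tilde{C}^{n+1}(\Delta((*, z))), \]
with the scalar determined by the built-in signs $(-1)^n$ and $(-1)^{n+1}$. I would then set $\phi := [x_1 \leftarrow \cdots \leftarrow x_n \leftarrow x] \in \tilde{C}^n(\Delta((*, z)))$ (and $\phi := [x] \in \tilde{C}^0(\Delta((*, z)))$ when $n = 0$) and compute $d^n_{\Delta((*, z))}(\phi)$ directly from the simplicial coboundary formula. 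An $(n+1)$-simplex $\tau$ of $\Delta((*, z))$ contributes only if $\tau$ is obtained by inserting a single vertex into the chain $x_1 < x_2 < \cdots < x_n < x$.

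The key rank argument then pins down the admissible insertions: because $rk_\Gamma(x_i) = i$ and $rk_\Gamma(x) = n+1$, consecutive entries in the chain differ by exactly one in rank, so no element of $(*, z)$ may be inserted strictly between them, nor strictly below $x_1$. The only insertions that occur are at the end, forcing the new vertex $y$ to satisfy $rk_\Gamma(y) = n+2$ and hence $x \leftarrow y \leftarrow z$. Reading the coefficient from the final position of the coboundary yields $d^n_{\Delta((*, z))}(\phi) = \pm\, \Sigma_z$, placing $\Sigma_z$ in the image of $d^n_{\Delta((*, z))}$ and so trivial in $\tilde{H}^{n+1}(\Delta((*, z)))$. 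Summing over $z$ gives $d_{S^{n+1}(\Gamma_+)} \circ d_{S^n(\Gamma_+)} = 0$.

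The main obstacle is not the structural idea but the bookkeeping: one must reconcile the signs $(-1)^n$ embedded in the definition of $d_{S^\bullet(\Gamma_+)}$ with the position-dependent signs of the simplicial coboundary, and also cleanly handle the edge case $n = 0$, where $[1]_x$ represents a class in $\tilde{H}^{-1}(\emptyset) = \F$ and the auxiliary cochain $\phi = [x]$ sits in degree $0$.
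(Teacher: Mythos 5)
Your argument is correct and is essentially the paper's own proof: both compute the composite on a generator, collect the result by the top element $z$, and identify the $z$-component (up to sign) with the coboundary $d^n_{\Delta((*,z))}$ of the auxiliary cochain $(x_1,\ldots,x_n,x)$, so that it vanishes in $\tilde{H}^{n+1}(\Delta((*,z)))$. Your explicit rank/saturation argument for why only end-insertions $x \leftarrow y \leftarrow z$ occur is exactly the (unstated) justification for the coboundary identity the paper invokes, so the two proofs coincide.
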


\begin{proof}
Let $[1]_x \in \tilde{H}^{-1}(\Delta((*, x)))$. Then
\begin{align*}
d_{S^{1}(\Gamma_+)} \circ d_{S^{0}(\Gamma_+)}([1]_x) & = d_{S^{1}(\Gamma_+)} \left( \bigoplus_{x \leftarrow y} [x]_y \right) \\
& = \bigoplus_{x < z} \left( \sum_{x \leftarrow y \leftarrow z} [x \leftarrow y]_z \right) \\
& =  \bigoplus_{x < z} [0]_{z}
\end{align*}
since
\[
d^{0}_{ \Delta((*, z))}(x) = -\sum_{x \leftarrow y \leftarrow z} (x,  y).
\]
for all $x < z$.

Assume $n > 0$. Let $[x_1 \leftarrow \cdots \leftarrow x_{n}]_x \in \tilde{H}^{n-1}(\Delta((*, x)))$. We compute
\begin{align*}
 & d_{S^{n+1}(\Gamma_+)} \circ d_{S^{n}(\Gamma_+)}([x_1 \leftarrow \cdots \leftarrow x_{n}]_x) \\
& = d_{S^{n+1}(\Gamma_+)} \left ( \bigoplus_{x \leftarrow y}  (-1)^{n}[x_1 \leftarrow \cdots \leftarrow x_{n} \leftarrow x]_y\right ) \\
& = \bigoplus_{x < z} \left( \sum_{x \leftarrow y \leftarrow z} (-1)^{2n+1} [x_1 \leftarrow \cdots \leftarrow x_{n} \leftarrow x \leftarrow y ]_z \right) \\
& =  \bigoplus_{x < z} [0]_{z}.
\end{align*}

The last equality holds because 

\[
d^{n}_{ \Delta((*, z))} \left(x_1, \ldots, x_{n}, x \right) = \sum_{x \leftarrow y \leftarrow z} (-1)^{n+1}\left( x_1, \ldots, x_{n}, x,  y  \right).
\]
for all $x < z$.
\end{proof}

We note $S^{p+q}(\Gamma_+)$ is $E^1_{p,q}$ for $q = -2p + m$. In fact, $d_{S^n(\Gamma_+))}$ is the differential $d^1_E$ on $E^{1}_{p, -2p + m}$.


\section{An Isomorphism of Co-chain Complexes}\label{anise}

We now relate Sections \ref{genrightann} and \ref{spectralsequence}. 

\begin{defn}
Define $\Psi_{\Gamma_+} : S^n(\Gamma_+) \rightarrow R_{\Gamma}(n, 0)$ via
\[ [1]_x \mapsto r_x \]
and for $n > 0$ and extend linearly from the formula
\[ [x_1 \leftarrow \cdots \leftarrow x_n]_x \mapsto r_xr_{x_n} \cdots r_{x_1}. \]
\end{defn}

\begin{prop}
The map $\Psi_{\Gamma_+} : S^{n}(\Gamma_+) \rightarrow R_{\Gamma}(n, 0)$ is well-defined.
\end{prop}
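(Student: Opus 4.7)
The plan is to realize $\Psi_{\Gamma_+}$ as the descent to cohomology of a map defined at the cochain level. For each $x \in \Gamma_+$ with $rk_\Gamma(x) = n+1$, extend the formula from the definition linearly to the full reduced cochain complex, obtaining a map $\tilde{C}^{n-1}(\Delta((*,x))) \to r_x R_{\Gamma,n} \subset R_\Gamma(n,0)$ that sends a chain $(y_1,\ldots,y_n)$ to $r_x r_{y_n} \cdots r_{y_1}$. The relations $r_a r_b = 0$ for $a \not\to b$ force this to vanish whenever some consecutive pair in the product is not a cover, so the formula is unambiguous on the entire cochain complex. Well-definedness of $\Psi_{\Gamma_+}$ will follow once the cochain-level map is shown to vanish on coboundaries.

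For $n = 0$ there is nothing to check, since $\tilde{C}^{-2}(\Delta((*,x))) = 0$. For $n \geq 1$, fix a basis cochain $\omega = (y_1,\ldots,y_{n-1})$, adopt the conventions $y_0 = *$ and $y_n = x$, and expand
\[
d\omega = \sum_{i=0}^{n-1} (-1)^i \sum_{y \in (y_i, y_{i+1})} (y_1,\ldots,y_i, y, y_{i+1},\ldots,y_{n-1}).
\]
After applying the cochain-level map and collecting terms by $i$, it suffices to prove that for each $0 \leq i \leq n-1$,
\[
\sum_{y \in (y_i, y_{i+1})} r_x r_{y_{n-1}} \cdots r_{y_{i+1}} r_y r_{y_i} \cdots r_{y_1} = 0,
\]
where the factor ``$r_{y_{i+1}}$'' at $i = n-1$ is read as $r_x$ and empty end-products are read as $1$.

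The strategy for each fixed $i$ is to enlarge the range of summation from $\{y \in (y_i, y_{i+1})\}$ to $\{y \in \Gamma_+ : y_{i+1} \to y\}$; after this enlargement the sum becomes the instance
\[
r_x r_{y_{n-1}} \cdots r_{y_{i+2}} \cdot \Bigl( r_{y_{i+1}} \sum_{y_{i+1} \to y} r_y \Bigr) \cdot r_{y_i} \cdots r_{y_1}
\]
of the defining relation $r_{y_{i+1}} \sum_{y_{i+1} \to y} r_y = 0$ and is therefore zero. The main obstacle, and the only genuine bookkeeping in the argument, is verifying that this enlargement changes nothing in $R_\Gamma$: the terms added (those $y$ with $y_{i+1} \to y$ but $y \leq y_i$) are killed by $r_y r_{y_i} = 0$, using $y \not\to y_i$, together with $r_{y_i}^2 = 0$ when $y = y_i$; the terms removed (those $y \in (y_i,y_{i+1})$ with $y_{i+1} \not\to y$) are killed by $r_{y_{i+1}} r_y = 0$. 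The boundary cases $i = 0$ and $i = n-1$ run identically, with the defining relations at $y_1$ and at $x$ playing the same role. This yields $\Psi_{\Gamma_+}(d\omega) = 0$, so the map descends to $\tilde{H}^{n-1}(\Delta((*,x)))$ for every $x$, and hence $\Psi_{\Gamma_+}$ is well-defined on $S^n(\Gamma_+)$.
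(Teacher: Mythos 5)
Your proof is correct and takes essentially the same route as the paper: both reduce well-definedness to showing that the monomial formula kills $\mathrm{im}(d^{n-2})$, and both make each insertion sum vanish by enlarging it to a full sum over covers (the discarded and added terms dying by the relations $r_ar_b=0$ for $a\not\to b$) so that a defining relation $r_c\sum_{c\to y}r_y=0$ applies. The only cosmetic difference is that the paper uses the dimension count to isolate the single contributing insertion position and defers the resulting computation to an observation in the proof of Lemma 4.1 of \cite{KS}, whereas you treat all positions uniformly and carry out that computation explicitly.
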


\begin{proof}
The map is clearly well-defined for $n= 0$. We then assume $n > 0$ and let $rk_{\Gamma}(x) = n+1$. Similar to the above proofs, it suffices to show: 

\[ \Psi_{\Gamma_+}  \left( im(d_{\Delta((*, x))}^{n-2}: C^{n-2}(\Delta(*, x)) \rightarrow C^{n-1}(\Delta((*, x)))) \right) = 0.  \]

Let $\alpha = (\alpha_1, \ldots, \alpha_{n-1}) \in C^{n-2}(\Delta((*, x)))$. Since $n-1 = dim \Delta((*, x))$, $[d^{n-2}_{\| \Delta((*, x)) \|}(\alpha)]_x$ is 

\[ \sum_{a \leftarrow \alpha_1} [ a \leftarrow \alpha_1 \leftarrow \cdots \leftarrow \alpha_{n-1} ]_x, \hspace{.3in} \sum_{\alpha_{n-2} \leftarrow a} [ \alpha_1 \leftarrow \cdots \leftarrow \alpha_{n-1} \leftarrow a ]_x, \] 
\[ \text{or} \hspace{.3in} \sum_{\alpha_i \leftarrow a \leftarrow \alpha_{i+1}} [ \alpha_1 \leftarrow \cdots \leftarrow \alpha_i \leftarrow a \leftarrow \alpha_{i+1} \leftarrow \cdots \leftarrow a_{n-1} ]_x \]
for some $i \in \{1, \ldots, n-2\}$. Using an observation made in the proof of Lemma 4.1 in \cite{KS}, we see that $\Psi_{\Gamma_+}$ evaluated at any of the above elements is zero in $R_{\Gamma}$. 
\end{proof}

The following theorem is an improvement of 3.2.1 of \cite{RSW4}. It is also an improvement of Theorem 4.2 from \cite{KS}. Unlike Theorem 4.2 in \cite{KS}, we need not assume $R_{\Gamma}$ is Koszul. 

\begin{thm}
$\Psi_{\Gamma_+} : S^{\bullet}(\Gamma_+) \rightarrow R_{\Gamma}(\bullet, 0)$ is a co-chain isomorpshim. 
\end{thm}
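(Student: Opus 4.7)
The plan is to establish the two defining properties of a cochain isomorphism: (i) $\Psi_{\Gamma_+}$ commutes with differentials, and (ii) $\Psi_{\Gamma_+}$ is a bijection in each degree.

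For (i), I would compute both $d_\Gamma \circ \Psi_{\Gamma_+}$ and $\Psi_{\Gamma_+} \circ d_{S^n(\Gamma_+)}$ on a basis element $[x_1 \leftarrow \cdots \leftarrow x_n]_x$. On the left, $d_\Gamma(r_x r_{x_n} \cdots r_{x_1}) = \sum_{a \in \Gamma_+} r_a r_x r_{x_n} \cdots r_{x_1}$, and since $r_a r_x = 0$ unless $a \to x$, this collapses to $\sum_{x \leftarrow a} r_a r_x r_{x_n} \cdots r_{x_1}$. On the right, the defining formula for $d_{S^n}$ produces $\sum_{x \leftarrow a} (-1)^n r_a r_x r_{x_n} \cdots r_{x_1}$. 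The two agree up to the built-in sign $(-1)^n$, which is an overall sign automorphism of the complex and may be absorbed by the standard sign convention $(-1)^{n(n-1)/2}$ on $\Psi_{\Gamma_+}$ in degree $n$.

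For (ii), I would use Lemma \ref{strongideal} to decompose $R_\Gamma(n,0) = \bigoplus_{rk_\Gamma(x)=n+1} r_x R_{\Gamma, n}$ and reduce to showing, for each fixed $x$ of rank $n+1$, that $\Psi_{\Gamma_+}$ restricts to an isomorphism $\tilde H^{n-1}(\Delta((*,x))) \to r_x R_{\Gamma, n}$. For surjectivity, every nonzero monomial in $r_x R_{\Gamma, n}$ has the form $r_x r_{y_n} \cdots r_{y_1}$ with $x \to y_n \to \cdots \to y_1$, forcing $rk_\Gamma(y_i) = i$; such monomials are in bijection with maximal chains of $(*, x)$, which are precisely the top-dimensional simplices spanning $C^{n-1}(\Delta((*,x)))$. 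For injectivity, I would identify the relations in $r_x R_{\Gamma, n}$ with the image of $d^{n-2}_{\Delta((*,x))}$. Explicitly, the quadratic relation $r_{y_i}\!\sum_{y_i \to z} r_z = 0$, multiplied on the left by $r_x r_{y_n} \cdots r_{y_{i+1}}$ and on the right by $r_{y_{i-2}} \cdots r_{y_1}$, yields $\sum_{z} r_x r_{y_n} \cdots r_{y_i} r_z r_{y_{i-2}} \cdots r_{y_1}$; since $r_z r_{y_{i-2}} = 0$ unless $z \to y_{i-2}$, the sum restricts to $z$ with $y_{i-2} < z < y_i$ of rank $i-1$, matching precisely the image under $\Psi_{\Gamma_+}$ of the coboundary of the $(n-2)$-cochain $(y_1, \ldots, y_{i-2}, y_i, \ldots, y_n)$. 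Since $\dim \Delta((*,x)) = n-1$, all top cochains are cocycles, so $\tilde H^{n-1}(\Delta((*,x))) = C^{n-1}/\mathrm{im}(d^{n-2})$, giving the bijection.

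The main obstacle is step (ii)(b), i.e., matching the quadratic relations of $R_\Gamma$ with the image of the simplicial coboundary. The key observation that makes this work is exactly the second family of relations $r_a r_b = 0$ for $a \not\to b$: this automatically trims the sums arising from the primary relations down to the ``in-between'' elements required in the coboundary, so no extraneous terms appear and the algebraic relations line up on the nose with simplicial coboundaries. Once this identification is clean, the cochain isomorphism follows directly; note in particular that, unlike Theorem 4.2 of \cite{KS}, Koszulity of $R_\Gamma$ is never needed because the argument is purely about degree-$n$ relations, not about higher syzygies.
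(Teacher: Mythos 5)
Your proposal is correct, but your treatment of injectivity takes a genuinely different route from the paper's. The paper argues by induction on the rank of $\Gamma$: it reduces to the top degree of a cyclic $\Gamma_x$, uses the fact that $rann_{R_\Gamma}(r_x)$ is linearly generated for a \emph{single} generator $r_x$ (true for any $\Gamma$, by inspecting the top-degree component of the relation ideal) to rewrite a kernel element as $r_{S_x(1)}\cdot v$, pulls $v$ back through the inductive isomorphism $\Psi_{(\Gamma_{<x})_+}$, and concludes that the original class is a simplicial coboundary, hence zero in $\tilde H^{m-1}(\Delta((*,x)))$. You instead give a direct, non-inductive presentation of each summand $r_xR_{\Gamma,n}$: the nonzero monomials are exactly the saturated chains of $(*,x)$, i.e.\ the top simplices of $\Delta((*,x))$, and the sandwiched relations you list correspond (up to an irrelevant uniform sign) to the elementary coboundaries $d^{n-2}(\sigma^*)$; since $\dim\Delta((*,x))=n-1$, every top cochain is a cocycle and $r_xR_{\Gamma,n}\cong C^{n-1}/\mathrm{im}(d^{n-2})=\tilde H^{n-1}(\Delta((*,x)))$. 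The one step you should spell out is why your listed elements exhaust the kernel and not merely lie in it: writing $V$ for the span of the generators, $Q\subset V\otimes V$ for the span of the quadratic relations, and the degree-$(n+1)$ relation space as $\sum_iV^{\otimes i}QV^{\otimes(n-1-i)}$, note that every non-chain word is already a relation, so intersecting with the span of chain words amounts to projecting the spanning elements $w_1\otimes q\otimes w_2$ (with $w_1,w_2$ monomials) onto chain words, and that projection yields exactly your list. With that supplied the argument is complete, and it makes transparent why no Koszulity hypothesis is needed. Finally, you correctly observe that the two differentials agree only up to the sign $(-1)^n$ --- the paper's own computation exhibits the same discrepancy without comment --- and your degree-by-degree rescaling of $\Psi_{\Gamma_+}$ repairs this without affecting kernels, images, or cohomology.
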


\begin{proof}
Let $[1]_x \in \tilde{H}^{-1}(\Delta((*, x)))$. Then
\[ \Psi_{\Gamma_+}(d_{S^0(\Gamma_+))}([1]_x) = \Psi_{\Gamma_+}\left( \bigoplus_{x \leftarrow y} [x]_y \right)
 = \left( \sum_{x \leftarrow y} r_y \right)r_x \]
and
\[ d_{\Gamma}(\Psi_{\Gamma_+}([1]_x) = d_{\Gamma}(x) = \left( \sum_{z \in \Gamma_+} r_z \right) r_x = \left( \sum_{x \leftarrow y} r_y \right)r_x. \]
Assume $n > 0$. Let $[x_1 \leftarrow \cdots \leftarrow x_n]_x \in \tilde{H}^{n-1}(\Delta((*, x)))$. We compute
\begin{align*}
\Psi_{\Gamma_+}(d_{S^n(\Gamma_+))}( [x_1 \leftarrow \cdots \leftarrow x_n]_x )) & = \Psi_{\Gamma_+} \left ( \bigoplus_{x \leftarrow y} (-1)^n[x_1 \leftarrow \cdots \leftarrow x_n \leftarrow x]_y \right) \\
 & = (-1)^n \left( \sum_{y \rightarrow x} r_y \right) r_xr_{x_n} \cdots r_{x_1}
\end{align*}
and
\[ d_{\Gamma}(\Psi_{\Gamma_+}( [x_1 \leftarrow \cdots \leftarrow x_n]_x )) = d_{\Gamma}(r_xr_{x_n} \cdots r_{x_1}) = \left( \sum_{y \rightarrow x} r_y \right) r_xr_{x_n} \cdots r_{x_1}. \] We conclude that $\Psi_{\Gamma}$ is a chain map. 

It is clear that $\Psi_{\Gamma_+}$ is an epimorphism. It remains to show that $\Psi_{\Gamma_+}$ is injective. We will proceed by induction on the rank of $\Gamma$. If $\Gamma$ has rank one ($m=0$), then

\[ dim_{\F} R_{\Gamma}(0, 0) = \|\Gamma_+\| = \sum_{x \in \Gamma_+} dim_{\F}\tilde{H}^{-1}(\Delta((*, x))) = dim_{\F}S^0(\Gamma) \]
which proves the base case. 

We now assume the theorem is true for all posets of rank $m$. It suffices to prove the result for cyclic posets of rank $m+1$, since 
\[ R_{\Gamma}(m, 0) = \bigoplus_{rk_{\Gamma}(z) = m+1} R_{\Gamma_z}(m, 0). \]
Thus, we set $\Gamma = \Gamma_x$ and note $S^m(\Gamma_+) = \tilde{H}^{m-1}(\Delta((*, x)))$. The poset $\Gamma_{< x}$ has rank $m$ and thus $\Psi_{(\Gamma_{<x})_+} : S^{\bullet}((\Gamma_{<x})_+) \rightarrow R_{\Gamma_{< x}}(\bullet, 0)$ is a co-chain isomorphism. It is apparent that $S^n(\Gamma_+) = S^n((\Gamma_{<x})_+)$ and $R_{\Gamma}(n, 0) = R_{\Gamma_{< x}}(n, 0)$ for $0 \leq n \leq m-1$. The maps $d_{S^n(\Gamma_+)}$ and $d_{S^n((\Gamma_{<x})_+)}$ coincide for $0 \leq n \leq m-2$. Similarly, the maps $d_{\Gamma}$ and $d_{\Gamma_{<x}}$ coincide for $0 \leq n \leq m-2$. Hence, to prove $\Psi_{\Gamma_+}$ is injective, it suffices to prove $\tilde{H}^{m-1}(\Delta((*, x))) \simeq R_{\Gamma_x}(m,0)$. 

We have a commutative diagram
\begin{diagram}
\bigoplus_{rk_{\Gamma}(y) = m} \tilde{H}^{m-2}(\Delta((*, y))) & \rTo^{d_{S^{m-1}(\Gamma_+)}} & \tilde{H}^{m-1}(\Delta((*, x))) \\
\dTo^{\Psi_{(\Gamma_{<x})_+}} && \dTo_{\Psi_{\Gamma_+}}\\
\bigoplus_{rk_{\Gamma}(y) = m} R_{\Gamma_y}(m-1, 0)  & \rTo_{d_{\Gamma}} & R_{\Gamma_x}(m,0) \\
\end{diagram}
By the inductive hypothesis, $\Psi_{(\Gamma_{<x})_+}$ is injective. We will use the notation $\alpha = (\alpha_1, \cdots, \alpha_m)$ for chains in $C^{m-1}(\Delta((*, x)))$. Suppose
\[ \sum_{\alpha \in C^{m-1}(\Delta((*, x)))} q_{\alpha}[\alpha_1 \leftarrow \cdots \leftarrow \alpha_m]_x \in \tilde{H}^{m-1}(\Delta((*, x))) = S^m(\Gamma_+) \]
is in the kernel of $\Psi_{\Gamma_+}$. Thus
\[ \sum _{\alpha \in C^{m-1}(\Delta((*, x)))} q_{\alpha}r_xr_{\alpha_m}\cdots r_{\alpha_1} = r_x \left(  \sum _{\alpha \in C^{m-1}(\Delta((*, x)))} q_{\alpha}r_{\alpha_m}\cdots r_{\alpha_1}   \right) = 0.  \]
We set $\Omega = (\Gamma_{< x})'$. Thus, by definition of $R_{\Gamma}$,
\[ \sum _{\alpha \in C^{m-1}(\Delta((*, x)))} q_{\alpha}r_{\alpha_m}\cdots r_{\alpha_1} = r_x(1) \sum_{\beta \in  C^{m-2}(\Delta(\Omega_+))} q_{\beta}r_{\beta_{m-1}}\cdots r_{\beta_1} \]
where $\beta = (\beta_1, \ldots, \beta_{m-1})$. We then observe
\begin{align*} 
& \Psi_{(\Gamma_{<x})_+} \left( \sum_{\alpha \in C^{m-1}(\Delta((*, x)))} q_{\alpha}[\alpha_1 \leftarrow \cdots \leftarrow \alpha_{m-1}]_{\alpha_m} \right) \\
 & = \sum _{\alpha \in C^{m-1}(\Delta((*, x)))} q_{\alpha}r_{\alpha_m}\cdots r_{\alpha_1} \\
& = r_x(1) \sum_{\beta \in  C^{m-2}(\Delta(\Omega_+))} q_{\beta}r_{\beta_{m-1}}\cdots r_{\beta_1} \\
& = \Psi_{(\Gamma_{<x})_+} \left( \sum_{\beta \in C^{m-2}(\Delta(\Omega_+))} \left( \bigoplus_{\beta_{m-1} \leftarrow y} q_{\beta}[\beta_1 \leftarrow \cdots \leftarrow \beta_{m-1}]_y \right) \right). 
\end{align*} 
The map $\Psi_{(\Gamma_{<x})_+}$ is an isomorphism and therefore
\begin{align*} & \sum_{\alpha \in C^{m-1}(\Delta((*, x)))} q_{\alpha}[\alpha_1 \leftarrow \cdots \leftarrow \alpha_{m-1}]_{\alpha_m} \\ & = \sum_{\beta \in C^{m-2}(\Delta(\Omega_+))} \left( \bigoplus_{\beta_{m-1} \leftarrow y} q_{\beta}[\beta_1 \leftarrow \cdots \leftarrow \beta_{m-1}]_y \right).
\end{align*}
Thus
\begin{align*} & d_{S^{m-1}(\Gamma_+)} \left( \sum_{\beta \in C^{m-2}(\Delta(\Omega_+))} \left( \bigoplus_{\beta_{m-1} \leftarrow y} [\beta_1 \leftarrow \cdots \leftarrow \beta_{m-1}]_y \right) \right) \\ & = \sum_{\alpha \in C^{m-1}(\Delta((*, x)))} q_{\alpha}[\alpha_1 \leftarrow \cdots \leftarrow \alpha_m]_x.  \end{align*}
And finally,
\[ d_{S^{m-1}(\Gamma_+)} \left( \sum_{\beta \in C^{m-2}(\Delta(\Omega_+))} \left( \bigoplus_{\beta_{m-1} \leftarrow y} [\beta_1 \leftarrow \cdots \leftarrow \beta_{m-1}]_y \right) \right) = [0]_x \]
in $\tilde{H}^{m-1}(\Delta((*, x)))$. We conclude $\Psi_{\Gamma_+} : \tilde{H}^{m-1}(\Delta((*, x))) \rightarrow R_{\Gamma_x}(m,0)$ is an isomorphism, and this completes our proof. 
\end{proof}

\begin{rmk}
Suppose $\Gamma_x$ is Cohen-Macaulay. Using the remark at the end of Section 4 and Corollary \ref{interesting}, we see that the cohomology of $S^{\bullet}(\Gamma_+)$ is the cohomology of $C^{\bullet}(\Delta(\Gamma_+)) = C^{\bullet}(Y)$. 
\end{rmk}

We end this section with several corollaries. We recall the definitions of $\Gamma^{>k}$, $\Gamma_{a, i}$, and $\Gamma(W, k)$ from Section \ref{defprelim}.

\begin{cor}
Let $0 \leq k < m$. Then $\Psi_{(\Gamma^{>k})_+} : S^{\bullet}((\Gamma^{>k})_+) \rightarrow R_{\Gamma}(\bullet, k)$ is a co-chain isomorpshim. 
\end{cor}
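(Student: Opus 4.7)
The plan is to deduce the corollary from the preceding theorem by applying it to the truncated poset $\Gamma^{>k}$. By Definition 2.1(7), $\Gamma^{>k}$ is itself a finite ranked poset with unique minimal element $*$ and rank $m+1-k$: the rank function satisfies $rk_{\Gamma^{>k}}(x) = rk_\Gamma(x) - k$ for $x \neq *$, and the cover relations are those of $\Gamma$ among elements of rank $>k+1$ together with the new covers $x \to *$ for $rk_\Gamma(x) = k+1$. For any $x$ with $rk_\Gamma(x) > k$, the open interval $(*, x)$ computed in $\Gamma^{>k}$ is precisely $\Gamma_{x,\, rk_\Gamma(x)-k}$. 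Applying the preceding theorem directly to the poset $\Gamma^{>k}$ gives a co-chain isomorphism $\Psi_{(\Gamma^{>k})_+} : S^\bullet((\Gamma^{>k})_+) \to R_{\Gamma^{>k}}(\bullet, 0)$.

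It remains to identify $R_{\Gamma^{>k}}(\bullet, 0)$ with $R_\Gamma(\bullet, k)$, under the reindexing sending $R_{\Gamma^{>k}}(N, 0)$ to $R_\Gamma(N+k, k)$. I would introduce the algebra map $\iota : R_{\Gamma^{>k}} \to R_\Gamma$ given on generators by $r_x \mapsto r_x$, verified well-defined by a short case analysis on $rk_\Gamma(x)$: for $rk_\Gamma(x) > k+1$, the relation $r_x \sum_{x \to y} r_y = 0$ is literally the same in both algebras since the covers of $x$ all lie in $(\Gamma^{>k})_+$; for $rk_\Gamma(x) = k+1$, the relation in $R_{\Gamma^{>k}}$ is vacuous because the only $\Gamma^{>k}$-cover of $x$ is $*$; and the quadratic vanishing relations $r_x r_w = 0$ transfer because the non-cover pairs among rank-$>k$ elements agree in $\Gamma$ and $\Gamma^{>k}$. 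Every monomial $r_{b_1} \cdots r_{b_{N+1}}$ spanning $R_\Gamma(N+k, k)$ has all factors of $\Gamma$-rank strictly exceeding $k$, so $\iota$ surjects onto this subspace. Moreover, because $x \not\to b_1$ whenever $rk_\Gamma(x) \leq k$ and $rk_\Gamma(b_1) = N+k+1$, premultiplication by any such $r_x$ annihilates $\iota(u)$ for $u \in R_{\Gamma^{>k}}(N, 0)$, so $d_\Gamma \iota(u) = \iota(d_{\Gamma^{>k}} u)$ and $\iota$ intertwines the differentials.

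The main obstacle will be proving $\iota$ is injective, i.e., ruling out the possibility that the ``lower'' relations of $R_\Gamma$ involving generators of rank $\leq k$ impose additional identifications on products of rank-$>k$ generators. My plan is to dispose of this by producing a retraction $\pi : R_\Gamma \to R_{\Gamma^{>k}}$ defined on generators by $r_x \mapsto r_x$ for $rk_\Gamma(x) > k$ and $r_x \mapsto 0$ otherwise; a case analysis confirms $\pi$ respects each defining relation of $R_\Gamma$ (when $rk_\Gamma(x) = k+1$ the sum over rank-$k$ covers is killed by $\pi$, and when $rk_\Gamma(x) \leq k$ the factor $\pi(r_x) = 0$ already kills the relation). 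Then $\pi$ is an algebra homomorphism with $\pi \circ \iota = \mathrm{id}$, forcing injectivity of $\iota$. Composing $\iota$ with the isomorphism supplied by the preceding theorem yields the desired co-chain isomorphism $\Psi_{(\Gamma^{>k})_+} : S^\bullet((\Gamma^{>k})_+) \to R_\Gamma(\bullet, k)$.
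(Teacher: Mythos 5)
Your argument is correct and follows the route the paper intends: the corollary is stated without proof as an immediate consequence of the preceding theorem applied to the truncated poset $\Gamma^{>k}$, which is exactly what you do. Your extra care in constructing $\iota$ and the retraction $\pi$ to justify the identification $R_{\Gamma^{>k}}(N,0)\cong R_\Gamma(N+k,k)$ and the compatibility of differentials fills in details the paper leaves implicit, and all of it checks out.
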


The following corollary is Corollary 5.3 from \cite{KS}, without the uniform hypothesis. 

\begin{cor}
Let $\Gamma$ be a finite ranked poset and $*\ne v\in \Gamma$ an element of rank $d+1$.  Then for any $0\le k\le d-1$, 
$$\dim(r_v R_\Gamma(d-1,k)) = \dim(\tilde H^{d-k-1}(\Delta(\Gamma^{>k}_v\setminus\{*,v\}))).$$ 
\end{cor}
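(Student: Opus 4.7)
The plan is to derive this statement as a degree-wise consequence of the preceding corollary, by extracting from the isomorphism $\Psi_{(\Gamma^{>k})_+}$ the direct summand indexed by the element $v$ on each side. Since $v$ has $\Gamma$-rank $d+1$ and hence $\Gamma^{>k}$-rank $d+1-k$, the summand attached to $v$ lives in cochain degree $n = d-k$, and the defining formula $[x_1 \leftarrow \cdots \leftarrow x_{d-k}]_x \mapsto r_x r_{x_{d-k}} \cdots r_{x_1}$ makes clear that $\Psi_{(\Gamma^{>k})_+}$ respects the direct-sum decomposition by top element on each side.

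On the source, using $rk_{\Gamma^{>k}}(x) = rk_\Gamma(x) - k$, one reads off from the definition of $S^n$ that
$$
S^{d-k}((\Gamma^{>k})_+) = \bigoplus_{rk_\Gamma(x) = d+1} \tilde{H}^{d-k-1}\bigl(\Delta(\Gamma^{>k}_x \setminus \{*,x\})\bigr),
$$
whose $v$-summand is exactly $\tilde{H}^{d-k-1}(\Delta(\Gamma^{>k}_v \setminus \{*,v\}))$. On the target, Lemma \ref{strongideal} produces a decomposition by lead generator, and since $r_y r_z \ne 0$ forces $rk_\Gamma(z) = d$, one has $r_y R_{\Gamma,d-k} = r_y R_\Gamma(d-1,k)$; hence the $v$-summand is $r_v R_\Gamma(d-1,k)$.

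Restricting $\Psi_{(\Gamma^{>k})_+}$ to these $v$-summands therefore yields an isomorphism
$$
\tilde{H}^{d-k-1}(\Delta(\Gamma^{>k}_v \setminus \{*,v\})) \; \cong \; r_v R_\Gamma(d-1,k)
$$
of $\F$-vector spaces, from which the dimension equality follows. I do not expect any genuine obstacle here, beyond the bookkeeping required to track the rank shift and thereby see that $v$'s summand lies at cochain degree $d-k$.
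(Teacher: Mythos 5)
Your proposal is correct and matches the paper's (implicit) route: the paper states this as an immediate consequence of the preceding corollary that $\Psi_{(\Gamma^{>k})_+} : S^{\bullet}((\Gamma^{>k})_+) \to R_{\Gamma}(\bullet, k)$ is a co-chain isomorphism, and your argument simply makes explicit the summand-by-summand bookkeeping (the decomposition by top element on the source, the decomposition via Lemma \ref{strongideal} on the target, and the identification $r_v R_{\Gamma, d-k} = r_v R_\Gamma(d-1,k)$ for $k \le d-1$). No gaps.
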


The following corollary is (3.1.1) from \cite{RSW4}, without the uniform hypothesis. 

\begin{cor}
Let $\Gamma$ be a finite ranked poset.  Then:
$$  H(R_\Gamma, -t) = 1 + \sum_{i\ge 1}\, \sum_{a\in \Gamma \atop rk_\Gamma(a) \ge i} (-1)^{i-2}\dim\tilde H^{i-2}(\Delta(\Gamma_{a,i})) t^i $$
\end{cor}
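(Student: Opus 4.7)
The plan is to extract $\dim R_{\Gamma, j}$ as a sum over $v \in \Gamma$ of reduced cohomologies via the preceding corollary, and then assemble the Hilbert series term by term. The heavy lifting is already done; what remains is essentially bookkeeping.

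First, for $j \geq 1$, Lemma \ref{strongideal} yields the direct sum decomposition
$$R_{\Gamma, j} \,=\, \bigoplus_{v \in \Gamma_+} r_v R_{\Gamma, j-1}.$$
A length-$j$ monomial starting with $r_v$ must descend by covers, so it lies in $R_\Gamma(rk_\Gamma(v) - 1,\, rk_\Gamma(v) - j)$; in particular $r_v R_{\Gamma, j-1} = 0$ unless $rk_\Gamma(v) \geq j$. When $rk_\Gamma(v) \geq j \geq 2$, I would identify $r_v R_{\Gamma, j-1}$ with the summand $r_v R_\Gamma(d-1, k)$ of the preceding corollary, with $d = rk_\Gamma(v) - 1$ and $k = rk_\Gamma(v) - j$ (which both lie in the required range $0 \leq k \leq d-1$). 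That corollary then gives
$$\dim r_v R_{\Gamma, j-1} \,=\, \dim \tilde H^{j-2}\bigl(\Delta(\Gamma^{>rk_\Gamma(v)-j}_v \setminus \{*,v\})\bigr) \,=\, \dim \tilde H^{j-2}(\Delta(\Gamma_{v,j})),$$
where the second equality is just the identification $\Gamma_{v,j} = \Gamma^{>rk_\Gamma(v)-j} \cap (*,v)$ from Section \ref{defprelim}. The edge case $j=1$ must be handled by hand: $r_v R_{\Gamma, 0} = \F r_v$ has dimension one, matching $\dim \tilde H^{-1}(\Delta(\Gamma_{v,1})) = \dim \tilde H^{-1}(\Delta(\emptyset)) = 1$.

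Summing over $v$ with $rk_\Gamma(v) \geq j$ then produces
$$\dim R_{\Gamma, j} \,=\, \sum_{\substack{v \in \Gamma \\ rk_\Gamma(v) \geq j}} \dim \tilde H^{j-2}(\Delta(\Gamma_{v,j})),$$
and substituting into $H(R_\Gamma, -t) = 1 + \sum_{j \geq 1} (-1)^j t^j \dim R_{\Gamma, j}$, together with the identity $(-1)^j = (-1)^{j-2}$, yields the claimed formula.

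No serious obstacle is expected: the key analytic content lives in the cochain isomorphism $\Psi_{(\Gamma^{>k})_+}$ and its dimensional consequence (the preceding corollary). What remains is merely to translate between the $(n,k)$ indexing of the complex $R_\Gamma(\bullet,k)$ and the usual degree grading on $R_\Gamma$, and to check by hand the small value $j=1$ where the cited range of validity just fails.
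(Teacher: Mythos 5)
Your argument is correct and is exactly the derivation the paper intends: the paper states this corollary without proof, as an immediate consequence of the preceding corollary on $\dim(r_v R_\Gamma(d-1,k))$ together with the decomposition of Lemma \ref{strongideal}, which is precisely your route. Your index translation ($d = rk_\Gamma(v)-1$, $k = rk_\Gamma(v)-j$, giving cohomological degree $j-2$ and the poset $\Gamma_{v,j}$) and your separate treatment of the $j=1$ case via $\tilde H^{-1}(\Delta(\emptyset)) \cong \F$ are both accurate.
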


\begin{cor}\label{usefulcor}
Let $0 \leq k < n \leq m+1$ and $W \subset \Gamma(n)$. Then $\Psi_{\Gamma(W, k)} : S^{\bullet}(\Gamma(W, k)) \rightarrow R_{\Gamma_S}(\bullet + n - k -1, n-k-1)$ is a co-chain isomorphism. 
\end{cor}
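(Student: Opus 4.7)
The plan is to reduce the claim to the preceding corollary by viewing $\Gamma(W,k)$ as a top-layer truncation of the ranked subposet $\Gamma_W \subseteq \Gamma$. Concretely, I would first observe that $\Gamma_W = \bigcup_{s\in W}[*,s]$ is a finite ranked poset with unique minimal element $*$, whose maximal elements lie in $W \subseteq \Gamma(n)$, and so $\Gamma_W$ has rank $n$. Moreover, rank in $\Gamma_W$ agrees with rank in $\Gamma$ on every element.

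Next I would identify $((\Gamma_W)^{>n-k-1})_+$ with $\Gamma(W,k)$: unwinding the definitions,
\[
(\Gamma_W)^{>n-k-1} \;=\; \{y\in\Gamma_W \,:\, rk_{\Gamma_W}(y) > n-k-1\}\cup\{*\} \;=\; (\Gamma_W\cap \Gamma^{\ge n-k})\cup\{*\},
\]
and removing $*$ gives precisely $\Gamma_W\cap\Gamma^{\ge n-k} = \Gamma(W,k)$. Because $S^{\bullet}(-)$ and $\Psi_{(-)}$ are defined entirely from the induced covering/rank data on the underlying set of generators $\{r_y\}$, the identification of sets above upgrades to an identification of cochain complexes $S^{\bullet}(\Gamma(W,k)) = S^{\bullet}(((\Gamma_W)^{>n-k-1})_+)$, under which $\Psi_{\Gamma(W,k)}$ and $\Psi_{((\Gamma_W)^{>n-k-1})_+}$ agree.

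Finally, applying the preceding corollary to the finite ranked poset $\Gamma_W$ (of rank $n$) with truncation parameter $n-k-1$ in place of $k$ gives that
\[
\Psi_{((\Gamma_W)^{>n-k-1})_+}\;:\; S^{\bullet}(((\Gamma_W)^{>n-k-1})_+)\;\longrightarrow\; R_{\Gamma_W}(\bullet + n-k-1,\, n-k-1)
\]
is a cochain isomorphism; translating through the identification above yields exactly the desired statement. The main obstacle, such as it is, is purely bookkeeping: one must check that the shift $\bullet \mapsto \bullet + n - k - 1$ in the target of the preceding corollary transfers correctly, which boils down to the observation that a maximal chain in $\Gamma(W,k)$ of combinatorial depth $j+1$ corresponds under $\Psi$ to a monomial $r_x r_{x_j}\cdots r_{x_1}$ with leading rank $j+n-k$ and trailing rank $n-k$ in $\Gamma_W$, i.e., an element of $R_{\Gamma_W}(j+n-k-1, n-k-1)$, so the indexing conventions align automatically.
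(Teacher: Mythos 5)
Your argument is correct and is exactly the intended derivation: the paper offers no explicit proof, presenting the statement as an immediate consequence of the co-chain isomorphism theorem via the truncation corollary applied to the rank-$n$ subposet $\Gamma_W$ (whose covering relations and ranks agree with those of $\Gamma$), which is precisely your reduction; the target $R_{\Gamma_S}$ in the statement is a typo for $R_{\Gamma_W}$, as you implicitly read it. Your bookkeeping of the degree shift $\bullet \mapsto \bullet + n-k-1$ is also the right way to reconcile the indexing with the preceding corollary.
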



\section{The Main Theorem}\label{weaklyCM}

Assume $\Gamma$ is cyclic. We remind the reader of several facts. By Theorem \ref{equivCM}, $\Gamma$ is Cohen-Macaulay if for all $x \in \Gamma_+$ and all $n < l \leq rk_{\Gamma}(x)$, $\tilde{H}^{n-2}(\Delta(\Gamma_{x, l})) = 0$. Also, from Theorem \ref{KSthm}, $\Gamma$ is Cohen-Macaulay if and only if $\Gamma$ is uniform and $R_{\Gamma}$ is Koszul. Recall again the notation $\Gamma(W, k)$ from Definition \ref{nSnotation} and note that the dimension of $\Delta(\Gamma(W, k))$ is $k$. 

\begin{defn}
Assume $\Gamma$ is cyclic. Then $\Gamma$ is weakly Cohen-Macaulay if for all $x \in \Gamma_+$, $0 \leq k < n \leq rk_{\Gamma}(x)$ and $W \in M_n(\Gamma_x)$,
\[H^{k-1}(S^{\bullet}(\Gamma(W, k))) = 0. \]
\end{defn}


The following fact shows that weakly Cohen-Macaulay is a slightly weaker condition than Cohen-Macaulay.

\begin{prop}
Let $\Gamma$ be a finite ranked cyclic poset. Then $\Gamma$ is Cohen-Macaulay if and only $\Gamma$ is uniform and weakly Cohen-Macaulay. 
\end{prop}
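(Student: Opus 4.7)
The plan is to bootstrap on Theorem~\ref{KSthm} (Cohen-Macaulay is equivalent to uniform together with $R_\Gamma$ Koszul), so that under the uniform hypothesis it suffices to show that weakly Cohen-Macaulay is equivalent to the Koszulity of $R_\Gamma$. The bridge is Corollary~\ref{usefulcor}: the isomorphism $\Psi_{\Gamma(W,k)}\colon S^\bullet(\Gamma(W,k))\to R_{\Gamma_W}(\bullet+n-k-1,\,n-k-1)$ converts cohomology vanishings for the complexes $S^\bullet(\Gamma(W,k))$ appearing in the definition of weakly Cohen-Macaulay into cohomology vanishings for the complexes $R_{\Gamma_W}(\bullet,j)$ appearing in Theorem~\ref{rightannihilator2}(3).

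For the forward direction, I would assume $\Gamma$ is Cohen-Macaulay. Theorem~\ref{KSthm} gives uniformity immediately, and by Remark~\ref{Muniform} the unique element of $M_n(\Gamma_x)$ is $\Gamma_x(n)$, so one only needs to verify the weakly Cohen-Macaulay vanishing for $W=\Gamma_x(n)$. Since $\Gamma$ is Cohen-Macaulay, every cyclic interval $\Gamma_y$ with $y\in\Gamma_W$ is Cohen-Macaulay; Theorem~\ref{KSthm} then implies each $R_{\Gamma_y}$ is Koszul, and by Theorem~\ref{cyclicgen} the algebra $R_{\Gamma_W}$ is itself Koszul. Applying Theorem~\ref{rightannihilator2} inside the cyclic intervals exhausting $\Gamma_W$ yields $H^{n-2}(R_{\Gamma_W}(\bullet,j))=0$ in the relevant range, which via Corollary~\ref{usefulcor} is precisely the weakly Cohen-Macaulay condition $H^{k-1}(S^\bullet(\Gamma(W,k)))=0$.

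For the reverse direction, I would assume $\Gamma$ is uniform and weakly Cohen-Macaulay and prove $R_\Gamma$ is Koszul by induction on $|\Gamma_+|$, with the rank-one base case trivial. Let $x$ be the maximum element and $\Omega=\Gamma\setminus\{x\}$; the inductive hypothesis combined with Theorem~\ref{cyclicgen} gives that $R_\Omega$ is Koszul, so Remark~\ref{importantrmk} reduces the Koszulity of $R_\Gamma$ to verifying condition (3) of Theorem~\ref{rightannihilator2}. Since $\Gamma$ is uniform, Remark~\ref{Muniform} gives $M_n(\Gamma)=\{\Gamma(n)\}$, and through Corollary~\ref{usefulcor} each weakly Cohen-Macaulay vanishing translates to the required $H^{n-2}(R_{\Gamma_W}(\bullet,n-k-1))=0$. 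As $k$ ranges over $0\le k<n$, this supplies the vanishings needed by Theorem~\ref{rightannihilator2}(3), so $R_\Gamma$ is Koszul and Theorem~\ref{KSthm} closes the argument.

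The main bookkeeping hurdle will be the index matching on the two sides of Corollary~\ref{usefulcor}: a weakly Cohen-Macaulay vanishing $H^{k-1}(S^\bullet(\Gamma(W,k)))=0$ for $0\le k<n$ corresponds, under the shift $j=n-k-1$, to $H^{n-2}(R_{\Gamma_W}(\bullet,j))=0$ with $j\in\{0,\dots,n-1\}$. The boundary cases require care: $k=0$ gives $H^{-1}=0$ tautologically, and for the extremal values of $j$ outside $\{0,\dots,n-3\}$ the complex $R_{\Gamma_W}(\bullet,j)$ is supported in essentially a single position, so the putative vanishings are either automatic or controlled by the uniform hypothesis together with the description of $\Gamma_W$ as a union of closed cyclic subintervals, which both sides of the proposition respect.
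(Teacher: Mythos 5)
Your proposal follows essentially the same route as the paper: both directions reduce to Theorem \ref{KSthm} plus the translation of the weakly Cohen--Macaulay vanishings into the cohomology of the complexes $R_{\Gamma_W}(\bullet,j)$ via Corollary \ref{usefulcor}, with Remark \ref{Muniform} collapsing $M_n(\Gamma_x)$ to $\{\Gamma_x(n)\}$ under uniformity and an induction on rank in the reverse direction. The one repair needed is in your forward direction: Theorem \ref{rightannihilator2} cannot be applied ``inside the cyclic intervals exhausting $\Gamma_W$,'' since $\Gamma_W$ is not cyclic and the complex $R_{\Gamma_W}(\bullet,j)$ does not split over the maximal elements of $\Gamma_W$ (left multiplication by $d_{\Gamma_W}$ mixes the summands $r_yR_{\Gamma_W}$); instead apply it to the ambient cyclic poset $\Gamma_x$ --- whose condition (3) is literally a statement about $R_{\Gamma_W}(\bullet,k)$ for $W\in M_n(\Gamma_x)$ --- or, as the paper does, identify $R_{\Gamma_W}(\bullet,j)$ with $R_{\Gamma_x}(\bullet,j)$ in the relevant degrees and invoke the Koszulity of $R_{\Gamma_x}$ directly.
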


\begin{proof}
In both directions $\Gamma$ is uniform. Thus, if $x \in \Gamma_+$, $\Gamma_x$ is uniform. By Remark \ref{Muniform}, $M_n(\Gamma_x) = \{ \Gamma_x(n) \}$ for all $1 \leq n \leq rk_{\Gamma}(x)$.

For the forward direction, assume $\Gamma$ is Cohen-Macaulay and let $x \in \Gamma_+$ and $0 \leq k < n \leq rk_{\Gamma}(x)$. Set $W = \Gamma_x(n)$. Then, by Corollary \ref{usefulcor}, $H^{k-1}(S^{\bullet}(\Gamma(W, k)))$ is isomorphic to $H^{k-1}(R_{\Gamma_S}(\bullet + n - k -1, n-k-1), d_{\Gamma_{W}})$, which is isomorphic to $H^{k-1}(R_{\Gamma_x}(\bullet + n - k -1, n-k-1), d_{\Gamma_x})$. Since $\Gamma_x$ is Cohen-Macaulay, $R_{\Gamma_x}$ is Koszul. Therefore, the above cohomology is zero, as desired.

For the reverse direction, we will proceed by induction on rank of $\Gamma$ and show that $R_{\Gamma}$ is Koszul. If $\Gamma$ has rank one, there is nothing to prove. We assume $\Gamma$ has rank $m+1$ with $m>0$. Set $\Gamma = \Gamma_z$. By Theorem 2.7 in \cite{KS}, it remains to show $H^n(R_{\Gamma}(\bullet, k)) = 0$ for all $0 \leq k < n \leq m-2$. Let $W = \Gamma(n+2)$. By Corollary \ref{usefulcor}, $H^n(R_{\Gamma}(\bullet, k))$ is isomorphic to $H^{n-k}(S^{\bullet}(\Gamma(W, n-k+1))) = 0$. 
\end{proof}

We now state our main theorem and an important corollary. 

\begin{thm}
Let $\Gamma$ be a finite ranked cyclic poset. Then $\Gamma$ is weakly Cohen-Macaulay if and only if $R_{\Gamma}$ is Koszul. 
\end{thm}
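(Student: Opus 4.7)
The natural plan is to induct on the rank $m+1$ of $\Gamma=\Gamma_x$. When $\mathrm{rk}(\Gamma)\le 3$, $R_\Gamma$ is Koszul by the corollary to Theorem \ref{cyclicgen}, and the wCM conditions are equally automatic, which settles the base case. For the inductive step, let $\Omega=\Gamma\setminus\{x\}$: by Theorem \ref{cyclicgen} and the inductive hypothesis, $R_\Omega$ is Koszul iff every $\Gamma_y$ with $y<x$ is wCM, and this common condition may be assumed in both directions of the theorem. Hence both directions reduce to showing that, with $R_\Omega$ Koszul, $R_\Gamma$ is Koszul iff the wCM conditions at $\Gamma_x$ itself hold.

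With $R_\Omega$ Koszul, Theorem \ref{rightannihilator2}(3) reformulates the Koszulity of $R_\Gamma$ as the vanishing of $H^{n-2}(R_{\Gamma_W}(\bullet,k_R),d_{\Gamma_W})$ for every $W\in M_n(\Gamma)$ with $1<n\le m$ and $0\le k_R\le n-3$. The cochain isomorphism of Corollary \ref{usefulcor}, under the substitution $k=n-k_R-1$, identifies these cohomology groups with $H^{k-1}(S^\bullet(\Gamma(W,k)))$ for $W\in M_n(\Gamma)$ and $2\le k\le n-1$. Thus Koszulity of $R_\Gamma$ is equivalent to the wCM vanishing restricted to this ``interior'' range of $(n,k)$.

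To close the induction I need to verify that the wCM cases lying outside this interior range are automatic. These fall into three families: (a) $k=0$, where $H^{-1}(S^\bullet(\Gamma(W,0)))=0$ vacuously since $S^\bullet$ is supported in nonnegative degrees; (b) $k=1$, where $H^0(S^\bullet(\Gamma(W,1)))=0$ follows from $W$ being linked -- part of the very definition of $W\in M_n(\Gamma_x)$ -- since linkedness makes $\Gamma(W,1)$ graph-connected so that the reduced $0$-cohomology vanishes; and (c) $n=m+1$, i.e.\ $W=\{x\}$, where $rann_{R_\Gamma}(r_x)=L_\Gamma(r_x)$ holds automatically from the defining relations of $R_\Gamma$ (as already observed in the proof of Theorem \ref{rightannihilator}), yielding $H^{m-1}(R_\Gamma(\bullet,k_R))=0$ for $0\le k_R\le m-2$ via Lemma \ref{ralem2} and hence the remaining wCM vanishings via Corollary \ref{usefulcor}. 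The main obstacle, in my view, is the index bookkeeping across Theorem \ref{rightannihilator2} and Corollary \ref{usefulcor} together with a clean treatment of these three boundary families; case (b) is the most delicate, and it is precisely the reason linkedness was built into the definition of $M_n$ in the first place. Once (a)--(c) are settled, the equivalence at $\Gamma_x$ follows formally and the induction closes.
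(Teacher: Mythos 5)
Your proposal is correct and takes essentially the same approach as the paper: the paper's own proof is a two-line induction on rank that simply cites Theorem \ref{rightannihilator2} and Corollary \ref{usefulcor}, and you supply precisely the index translation $k=n-k_R-1$ and the boundary cases ($k=0$, $k=1$, $n=m+1$) that the paper leaves implicit. The one point to watch is case (b): $H^0(S^{\bullet}(\Gamma(W,1)))$ vanishes only if degree-zero cohomology is read in the reduced (augmented) sense, since the unaugmented kernel of $d^0$ is one-dimensional for linked $W$; your reading is the one the definition must intend, as the paper's own example $\Theta^*$ shows.
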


\begin{proof}
We will proceed by induction on the rank, $m+1$, of $\Gamma$. The theorem is true for all $\Gamma$ of rank one. Assume $m > 0$ and that the theorem is true for all cyclic posets of rank less than or equal to $m$. The theorem follows from Theorem \ref{rightannihilator2} and Corollary \ref{usefulcor}. 
\end{proof}

\begin{cor}
Let $\Gamma$ be a finite ranked poset. Then for all $x \in \Gamma$, $\Gamma_x$ is weakly Cohen-Macaulay if and only if $R_{\Gamma}$ is Koszul. 
\end{cor}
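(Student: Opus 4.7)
The plan is to deduce this corollary directly from the immediately preceding main theorem together with Theorem \ref{cyclicgen}, which reduces Koszulity of $R_\Gamma$ to Koszulity of $R_{\Gamma_x}$ for each $x \in \Gamma_+$. There is essentially no new content to prove; the work is just bookkeeping to splice the two results together.

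First I would note that for every $x \in \Gamma$ the subposet $\Gamma_x = [*,x]$ is itself a finite ranked cyclic poset (cyclic by construction, ranked since $\Gamma$ is), so the main theorem applies to each $\Gamma_x$ individually and yields the equivalence
\[
R_{\Gamma_x} \text{ is Koszul} \iff \Gamma_x \text{ is weakly Cohen-Macaulay.}
\]
The case $x = *$ is vacuous: $\Gamma_* = \{*\}$ has no generators, $R_{\Gamma_*} = \F$ is trivially Koszul, and the defining condition of weakly Cohen-Macaulay is vacuously satisfied because $\Gamma_+ = \emptyset$.

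Next I would invoke Theorem \ref{cyclicgen}, which asserts that $R_\Gamma$ is Koszul if and only if $R_{\Gamma_x}$ is Koszul for every $x \in \Gamma_+$. Chaining this equivalence with the equivalence from the previous paragraph gives
\[
R_\Gamma \text{ is Koszul} \iff R_{\Gamma_x} \text{ is Koszul for all } x \in \Gamma_+ \iff \Gamma_x \text{ is weakly Cohen-Macaulay for all } x \in \Gamma_+,
\]
which, together with the trivial $x = *$ case, gives the claimed statement.

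There is no real obstacle here; the one point worth emphasizing is simply that the definition of weakly Cohen-Macaulay was only given for cyclic posets, which is exactly why the statement of the corollary is phrased in terms of the cyclic subposets $\Gamma_x$ rather than in terms of $\Gamma$ itself. Thus the corollary is the natural extension of the main theorem to the non-cyclic setting, mediated by Theorem \ref{cyclicgen}.
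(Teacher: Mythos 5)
Your proposal is correct and is exactly the intended argument: the paper states this as an immediate corollary of the main theorem combined with Theorem \ref{cyclicgen}, which is precisely the splice you perform. No issues.
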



\section{A Few Examples and Remarks}\label{examples2}

We first return to Example \ref{ex4.1}.

\begin{example}
The poset $\Theta^*$ from Figure \ref{theta*} is weakly Cohen-Macaulay. By inspection,
$S^0({\Theta^*}_+) = \F [1]_u + \F [1]_t$, $S^1({\Theta^*}_+) = \F[u]_q$, and $S^2({\Theta^*}_+) = S^3({\Theta^*}_+) = 0$. Also, the map $d_{S^0({\Theta^*}_+)}: S^0({\Theta^*}_+) \rightarrow S^1({\Theta^*}_+)$ is surjective. We remark that the dual of a weakly Cohen-Macaulay cyclic poset need not be weakly Cohen-Macaulay.
\end{example}


\begin{example}
The class of weakly Cohen-Macaulay posets includes many non-Cohen-Macaulay lattices. For example the lattice $\Lambda$, in Figure 3, is weakly Cohen-Macaulay but not Cohen-Macaulay. We note that not all lattices are weakly Cohen-Macaulay.

\begin{figure}[h!]
  \centering
  \includegraphics[scale=0.5]{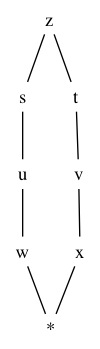}
  \caption{The poset $\Lambda$.} \label{Lambda}
\end{figure} 
\end{example}

It is easy to build weakly Cohen-Macaulay posets from Cohen-Macaulay posets. 

\begin{example}\label{wedge2}
Let $\Gamma$ and $\Omega$ be finite ranked posets with equal rank and unique minimal elements $*_{\Gamma}$ and $*_{\Omega}$. We define
\[ \Gamma \vee \Omega = \Gamma \cup \Omega / (*_{\Gamma} \sim *_{\Omega}). \]
We denote the unique minimal element of $\Gamma \vee \Omega$ by $*$. If $\Gamma$ and $\Omega$ are weakly Cohen-Macaulay, then so is $\overline{\Gamma \vee \Omega}$. In particular, if $\Gamma$ and $\Omega$ are Cohen-Macaulay, then $\overline{\Gamma \vee \Omega}$ is weakly Cohen-Macaulay. Figure \ref{Lambda} above gives an example of this construction. 
\end{example}

We can give a more general construction of weakly Cohen-Macaulay posets: suppose $\Gamma$ is pure and $\Gamma_x$ is Cohen-Macaulay for all $x \in \Gamma$. Then $\overline{\Gamma}$ is weakly Cohen-Macaulay if and only if for all $W \in M_{rk(\overline{\Gamma})-1}(\overline{\Gamma})$, $\overline{\Gamma_W}$ is Cohen-Macaulay.

\bibliographystyle{amsplain}
\bibliography{bibliog}

\def\cprime{$'$}
\providecommand{\bysame}{\leavevmode\hbox to3em{\hrulefill}\thinspace}
\providecommand{\MR}{\relax\ifhmode\unskip\space\fi MR }
\providecommand{\MRhref}[2]{%
  \href{http://www.ams.org/mathscinet-getitem?mr=#1}{#2}
}
\providecommand{\href}[2]{#2}
\begin{thebibliography}{10}

\bibitem{BGSsurvey}
Anders Bj{\"o}rner, Adriano M.~Garsia, and Richard P.~Stanley, \emph{An introduction to
  {C}ohen-{M}acaulay partially ordered sets}, ({B}anff, {A}lta., 1981), NATO Adv. Study Inst. Ser. C: Math. Phys. Sci., 83, Reidel,
  Dordrecht-Boston, Mass., 1982, 583--615.  \MR{0661307}

\bibitem{CPS}
Thomas Cassidy, Christopher Phan, and Brad Shelton, \emph{Noncommutative
  {K}oszul algebras from combinatorial topology}, J. Reine Angew. Math.
  \textbf{646} (2010), 45--63. \MR{2719555}


\bibitem{GGRSW}
Israel Gelfand, Sergei Gelfand, Vladimir Retakh, Shirlei Serconek, and
  Robert~Lee Wilson, \emph{Hilbert series of quadratic algebras associated with
  pseudo-roots of noncommutative polynomials}, J. Algebra \textbf{254} (2002),
  no.~2, 279--299. \MR{1933871}

\bibitem{GRSW}
Israel Gelfand, Vladimir Retakh, Shirlei Serconek, and Robert~Lee Wilson,
  \emph{On a class of algebras associated to directed graphs}, Selecta Math.
  (N.S.) \textbf{11} (2005), no.~2, 281--295. \MR{2183849}
 

\bibitem{KS}
Tyler Kloefkorn and Brad Shelton, \emph{Splitting algebras: {K}oszul, {C}ohen-{M}acaulay and numerically {K}oszul}, J. Algebra, \textbf{422C} (2015),  660--685. \MR{3272095}

\bibitem{Nacin}
David Nacin, \emph{Properties of a minimal non-{K}oszul $A(\Gamma)$}, Noncommutative birational geometry, representations and combinatorics, 215--223, Contemp. Math., \textbf{592}, Amer. Math. Soc., Providence, RI, 2013. \MR{3087946}

\bibitem{Pi}
Dmitri Piontkovski, \emph{Noncommutative {K}oszul filtrations}, http://arxiv.org/pdf/math/0301233v1.pdf.

\bibitem{PP}
Alexander Polishchuk and Leonid Positselski, \emph{Quadratic algebras},
  University Lecture Series, vol.~37, American Mathematical Society,
  Providence, RI, 2005. \MR{MR2177131}
  
\bibitem{Pos}
Leonid Positselski, \emph{Relation between the Hilbert series of dual quadratic algebras does not imply {K}oszulity}, Functional Anal. and its Appl. \textbf{29} (1995), no.~3, 83--87. \MR{1361964}
  
\bibitem{RSW5}
Vladimir Retakh, Shirlei Serconek, and Robert~Lee Wilson, \emph{On a class of {K}oszul algebras associated to directed graphs}, J. Algebra, \textbf{304} (2006), no.~2, 1114--1129. \MR{2265508}

\bibitem{RSW4}
\bysame, \emph{Hilbert series of
  algebras associated to directed graphs and order homology}, J. Pure Appl. Algebra \textbf{216} (2012), no.~6, 1397--1409. \MR{2890509}

\bibitem{RSW1}
\bysame, \emph{Hilbert series
  of algebras associated to directed graphs}, J. Algebra \textbf{312} (2007),
  no.~1, 142--151.  \MR{MR2320451}

\bibitem{RSW3}
\bysame, \emph{Koszulity of splitting algebras associated with cell complexes},
  J. Algebra \textbf{323} (2010), no.~4, 983--999. \MR{2578588}

\bibitem{SadSh}
Hal Sadofsky and Brad Shelton, \emph{The {K}oszul property as a topological
  invariant and measure of singularities}, Pacific J. Math. \textbf{252}
  (2011), no.~2, 473--486.  \MR{2860435}
  
\bibitem{SheltonII}
Brad Shelton, \emph{Splitting algebras II: the cohomology algebra}, http://arxiv.org/abs/1208.2202.pdf.
  
\bibitem{StanleyEC}
Richard~P. Stanley, \emph{Enumerative combinatorics, Volume I}, Second edition, Cambridge Studies in Advanced Mathematics, 49. Cambridge University Press, Cambridge, 2012. \MR{2868112}
  
\bibitem{CW}
Charles A. Weibel, \emph{An introduction to homological algebra}, Cambridge Studies in Advanced Mathematics, vol.~38, Cambridge University Press, Cambridge, 1994. \MR{1269324}

\end{thebibliography}
\end{document}